\pgfplotsset{compat=newest}
\newtheorem{theorem}{Theorem}[section]
\newtheorem{assumption}[theorem]{Assumption}
\newtheorem{corollary}[theorem]{Corollary}
\newtheorem{example}[theorem]{Example}
\newtheorem{lemma}[theorem]{Lemma}
\newtheorem{proposition}[theorem]{Proposition}
\newtheorem{remark}[theorem]{Remark}
\newenvironment{proof}[1][Proof]{\textbf{#1.} }{\ \rule{0.5em}{0.5em}}
\tikzset{
    scale plot marks/.is choice,
    scale plot marks/false/.code={
        \def\pgfuseplotmark##1{\pgftransformresetnontranslations\csname pgf@plot@mark@##1\endcsname}
    },
    scale plot marks/true/.style={},
    scale plot marks/.default=true
}
\numberwithin{equation}{section}
\title{After-sales services during an asset's lifetime: collaborative planning of system upgrades}
\author{Fiona~Sloothaak \and Alp Ak\c{c}ay \and Matthieu van der Heijden \and Geert-Jan van Houtum}
\date{\vspace{-1ex}}
\begin{document}
\maketitle

\begin{abstract}
We consider a physical asset consisting of complex systems, where the systems may require upgrades during the lifetime of the asset.  In practice, the asset owner and system supplier can make the upgrade decisions together, requiring a decision support model that can be jointly used to optimize the total benefit for both parties. Motivated by a real-life use case including an asset owner and a system supplier, we build a continuous-time model to optimize the upgrade decisions of a system during the fixed lifetime of the asset. In our model, we capture the key critical factors that drive the upgrade decisions: increasing functionality requirements due to evolving technology, age-dependent maintenance costs, a predetermined overhaul plan of the asset, and the lifetime of the asset. A system upgrade is less costly if it is executed jointly with an asset overhaul. We first analyze the case with no additional cost of upgrading outside an overhaul. We analytically characterize the structure of the optimal upgrade policy under various realistic assumptions that lead to different types of cost functions. We then use these results as a building block to characterize the optimal policy for a {generalized} cost function. When there is a penalty for upgrading outside an overhaul moment, we propose a dynamic programming approach that efficiently determines the optimal upgrade policy by using our analytical results. We also prove {that as} this penalty increases, the optimal policy can only change to one where the number of upgrades not jointly executed with overhauls is {reduced}. {However, the} optimal number of upgrades is a non-increasing function of this penalty. Also surprisingly, more overhauls may lead to a smaller number of upgrades under the optimal policy. 
\end{abstract}

\section{Introduction}
Maintaining a satisfactory operational level for {physical} assets, i.e.{,} capital goods, during their entire lifetime is a very challenging problem~\citep{Pierskalla1976,Arts2019}. Assets typically have complex designs with many different systems installed. {Each system itself often consists of many components that require replacement in case of failures. The more complex and specialized the systems, the more common it is to see a closer engagement between system suppliers and asset owners, typically in the form of a service agreement that delegates certain roles to the system supplier (e.g., procurement of replacement components, maintenance of the system), referred to as \textit{after-sales services} \citep{li2022after}. Since assets are typically used for a long time, the systems in it can  become obsolete and require upgrades. The decision on when to upgrade a system is a complex one with inputs necessary from both the asset owner and the system supplier. Especially for complex systems customized and serviced by a system supplier in the form of a partnership with an asset owner, there is an interest to make the system upgrade decisions in a collaborative way, and this requires a decision support model to plan the system upgrades as part of after-sales services.} 
%In this paper, we study the planning of system upgrades during a time horizon that is equal to the lifetime of the asset.

{Our work is motivated by a real-life problem as part of a collaboration with an owner of moving assets in a maritime setting (e.g., a frigate or a type of small and fast military ship) and an original equipment manufacturer who designs, builds, and services a complex system in these assets (e.g., a radar system installed on a frigate).} Frigates are typically in use for over 30~years, while the incorporated systems themselves do not necessarily last several decades. {To ensure a frigate works properly,} a schedule is created upfront with designated moments (i.e., overhauls) where the frigate is docked and large-scale maintenance is executed. Docking a ship is costly, and it can therefore be efficient {to perform the upgrade of a system at such an overhaul moment. For instance, by performing an {upgrade of a radar system together} with a frigate overhaul, the ``setup'' required to upgrade the radar system can be saved. On the other hand, the costs for sustaining the current system may increase over time, leading to the question when to perform system upgrades to strike the trade-off between the costs of upgrades and the costs associated with using an existing system in the presence of a predetermined asset overhaul schedule.
	%as difficulties can arise in finding spare parts for repairing the failed components of the system, or by not being able to exploit the latest technology. This leads to the question of when to upgrade the system to strike the trade-off between the costs of upgrades and the costs associated with using an existing system, but this 
}

% to be maintained through several upgrades in order to keep up with the desired level of functionality, which continually increases with the age of the asset due to e.g.{,}~technological advancements. These maintenance activities come with costs and may also affect the availability and operations of the asset. To minimize downtime, it is decided upfront to have several so-called overhaul moments during the asset's lifetime where the major maintenance activities of different systems can be executed in parallel. Among those maintenance activities are system upgrades, which improves the functionality and/or quality of an asset to meet the user's requirements. In this paper, we study the planning of upgrades for all critical systems for a time horizon that equals the lifetime of the asset, where a fixed overhaul planning is set upfront for the asset.

%There are many factors that play a role in the decision whether and when to execute a system upgrade. 
Based on extensive discussions {with our industry partners}, we came to the {three} dominant factors that drive the costs for sustaining a system: functionality requirements, {the age-dependent behavior of failure frequency and maintenance costs}, and the asset's lifetime. We elaborate on these factors in more detail next. First, advancement of technologies leads to systems that perform much better than the older ones, which may cause higher expectations of a system's functionality. {That is, the asset owner experiences a disutility of not using a system with the most recent technology. Examples include} the desire for a more energy-sustainable system, the wish to rival or even surpass the functionality that competitors have, and more. This aspect is studied in some papers (e.g.{,}~\citealt{Nair1992,Hopp1994}) but is surprisingly often neglected in existing literature on maintenance strategies~\citep{DeJonge2020}.

Secondly, {the components of a system may fail over time, requiring corrective maintenance so that the system can continue its operation as intended.} 
%
%
% a well-studied aspect is the notion that system failures may occur over time. Existing literature often focuses on scenarios where failures become more likely as the system ages, corresponding to increasing failure rates, or on constant failure rates which are common for electronics. Under an age-replacement policy, a system is replaced at some planned age or upon failure. In our setting, an asset owner may wish to upgrade its system to an improved version sooner when failures become increasingly likely. There is a vast literature on age-replacement policies, and it continues to be an active field of research~\citep{Wang2002,Zhao2017}. 
%
%Thirdly, 
{We note that older systems may require more corrective maintenance, known as age-dependent failure behavior \citep{Wang2002,Zhao2017}. Also,} all maintenance activities come with associated costs, which are stationary in classical variants of age-based maintenance models~\citep{Barlow1960}. Naturally, \textit{age-dependent costs} are often necessary to model reality more closely. Several factors contribute to the need for age-dependent costs, such as salvage value and failure costs. Regarding the salvage value, we observe that the residual value of the system in use typically decreases as newer and improved versions come to market. The failure costs can also be age-dependent, especially when assets have relatively long lifetimes. This is often due to obsolescence issues, i.e.{,} spare components become costlier or even unavailable over time since newer versions are developed and the production of older components or systems is discontinued. Moreover, trained technicians with the knowledge and skills to execute upgrades may no longer be available. A well-chosen strategy can reduce the risk of (unexpected) obsolescence issues and their corresponding costs~\citep{IEC62402}, such as monitoring for obsolescence, last-time-buys~\citep{Behfard2015} or partner agreements, e.g.{,}~incorporation of availability guarantees from suppliers for critical components~\citep{Tomczykowski2003}. Nevertheless, the level of uncertainty and associated costs {make the adoption of these strategies difficult, and can lead to using system upgrades as a means to tackle the increasing  sustainment costs.} 

Finally, the timing of an upgrade may be affected by the remaining lifetime of the asset. Existing literature often focuses on long-run expected average costs for systems with an infinite time horizon. However, an infinite horizon rarely applies to real-life settings: as the end of the asset's lifetime is near, it makes little sense to execute large-scale upgrades or improvements anymore. Moreover, the total costs during the lifetime can be reduced by bringing forward (or postponing) certain maintenance activities.

In this paper, we consider a continuous-time  model that aims to find an optimal upgrade policy for a single system in an asset with a predetermined overhaul planning. 
%To do so, 
{The main novelty of our work is to bring all three factors mentioned above in one model in order to address the planning problem of system upgrades as realistic as possible. Also, to the best of our knowledge, our model is the first in the age-based maintenance literature %with  age-dependent costs by introducing 
	with a schedule of low-cost maintenance moments (i.e., overhaul moments for system upgrades) and age-dependent operating costs (i.e., the age-dependent penalty of not using the system with the most recent technology).} For this model, we analytically characterize the structure of the optimal upgrade policy {by first considering  a special case with no additional penalty of upgrading outside an overhaul moment. We do this first for some special forms of the cost functions, and then, by using these results as a building block, we characterize the structure of the optimal upgrade policy under a {generalized} cost structure.} This allows for a relatively simple characterization of the optimal upgrade policies, and can be used to design an efficient solution approach based on dynamic programming for the case where the penalization {for upgrading outside an overhaul moment} is also included. Finally, we establish sensitivity results with respect to the input parameters. For example, it is intuitively clear that the optimal number of upgrades does not increase as the price for the upgrades increases. However, surprisingly, this is not necessarily true with respect to many other input parameters such as the penalty for upgrading outside overhauls, the number of overhauls, or a more rapid increase of failure costs.

The remainder of this paper is organized as follows. In Section~\ref{sec:Literature}, we review related literature, and in Section~\ref{sec:ModelDescription} we introduce our model formally. In Section~\ref{sec:Analysis}, we derive high-level insights for the structure of the optimal policy if upgrade executions outside overhauls are not penalized. For the general case, we use the results in Section~\ref{sec:Analysis} to introduce a solution approach on how to calculate the optimal policy efficiently in Section~\ref{sec:SolutionApproach}. Finally, we derive several managerial insights in Section~\ref{sec:Sensitivity} by testing the dependency of the optimal policy with respect to the input parameters. We summarize our key findings and insights in Section~\ref{sec:Conclusion}.

\section{Literature review}\label{sec:Literature}
Our work mainly relates to two streams of literature: (i) age-based maintenance models with age-dependent costs, and (ii) upgrade/replacement decision-making by considering the effects of evolving technology. In this section, we review the related literature for both streams, and describe the contributions of our paper.

The first stream concerns age-based maintenance models. In the classical variants of these models, we consider a single system with a stochastic lifetime distribution. The policy maker can choose to execute preventive maintenance against fixed costs that are lower than the costs that are incurred if the maintenance activity would be executed upon failure~\citep{Barlow1960}. However, it is often the case that not all times are convenient to do maintenance on a system. Examples include systems in production lines, the aerospace or maritime industry, or other systems in industries where unplanned downs severely interrupt operations~\citep{Arts2019}. In that case, the policy maker may choose to apply {\it minimal repair} until the planned maintenance moment is reached, meaning that the system is restored to a state right before the failure any time a failure occurs.

There is a vast literature on these types of maintenance models~\citep{Wang2002}. We point out that in these papers executing maintenance is often considered to be a replacement of the system. Initially, stationary costs are considered~\citep{Barlow1960,Barlow1965}, but quickly also age-dependent costs were accounted for in the models{, as shown in Table~\ref{tab:littable}}. In~\cite{Boland1982} and \cite{BolandProschan1982}, a setting is described where the minimal repair costs increase with the number of occurrences of failures for both a finite and an infinite time horizon. A more general minimal repair cost function is considered in~\cite{Tilquin1975}, but replacement costs are assumed to be stationary. Non-decreasing minimal repair and replacement costs are considered in~\cite{Segawa1992} for an infinite time horizon. {Under age-dependent salvage values, \cite{chien2010effect} shows the existence and uniqueness of the optimal age for preventive replacement.} A recent paper~\citep{Sanoubar2020} shows the existence of an optimal replacement policy for an infinite time horizon that allows for a wide class of age-dependent replacement cost functions without minimal repair. {\cite{schouten2022maintenance} extend some classical age-based replacement policies with costs that vary in a cyclical manner.} Literature that considers a finite time horizon is scarcer; \cite{Dagpunar1994}  determine the optimal number of imperfect preventive maintenance actions for a finite horizon given that the minimal repair is made at any failure (with stationary costs).

% Table generated by Excel2LaTeX from sheet 'Sheet1'
\begin{table}[h!]
	\centering
	\caption{Literature on age-based maintenance models with age-dependent costs}
	\scalebox{0.95}{
		\begin{tabular}{lccccccccc}
			& \rotatebox{90}{Tilquin and Cleroux (1975)} & \rotatebox{90}{Boland (1982)} & \rotatebox{90}{Boland and Proschan (1982)} & \rotatebox{90}{Segawa et al. (1992)} & \rotatebox{90}{Dagpunar and Jack (1994)} & \rotatebox{90}{Chien (2010)} & \rotatebox{90}{Sonoubar et al. (2021)} & \rotatebox{90}{Schouten et al. (2022)} & \rotatebox{90}{\textbf{Our paper}} \\
			\midrule
			\textbf{Planning horizon } &       &       &       &       &       &       &       &       &  \\
			{\it Finite (deterministic)} &       &       & x     &       &   x    &       &       &       & x \\
			{\it Infinite} & x     & x     & x     & x     &      &   x    & x     & x     &  \\
			\midrule
			\textbf{Time} &       &       &       &       &       &       &       &       &  \\
			{\it Continuous} & x     & x     & x     & x     & x     &  x     & x     &       & x \\
			{\it Discrete} &       &       &       &       &       &       &       & x     &  \\
			\midrule
			\textbf{Preventive action} &       &       &       &       &       &       &       &       &  \\
			{\it Perfect} & x     & x     & x     & x     &       &   x    & x     & x     & x \\
			{\it Imperfect} &       &       &       &       & x     &       &       &       &  \\
			\midrule
			\textbf{Effect of corrective action} &       &       &       &       &       &       &       &       &  \\
			{\it New condition} &       &       &       & x     &       &  x     & x     &       &  \\
			\multicolumn{1}{p{14em}}{{\it Condition just before failure (i.e., minimal repair)}} & x     & x     & x     & x     & x     &       &       &       & x \\
			\midrule
			\textbf{Age-dependent costs} &       &       &       &       &       &       &       &       &  \\
			{\it Minimal repair}  & x     & x     & x     & x     &       &       &       &       & x \\
			{\it Corrective replacement} &       &       &       & x     &       &       & x     & x     &  \\
			{\it Preventive replacement} &       &       &       &       & x     &   x    & x     & x     & x \\
			{\it  Operating cost} &       &       &       &       &       &       &       &       & x \\
			\midrule
			\multicolumn{1}{p{16em}}{\textbf{Schedule of low-cost preventive action moments}} &       &       &       &       &       &       &       &       &  \\
			{\it No}   & x     & x     & x     & x     & x     &   x    & x     & x     &  \\
			{\it Yes}   &       &       &       &       &       &       &       &       & x \\
			\bottomrule
		\end{tabular}%
	}
	\label{tab:littable}%
\end{table}%

This paper considers a continuous-time setting with non-decreasing upgrade costs, failure rate and minimal repair costs, while accounting for the finite lifetime of the asset. {It is important to highlight that the other papers mentioned in Table~\ref{tab:littable} build maintenance optimization models at component level, while we do it at system level (i.e., we take replacement decisions for the system, but at the same time consider a cost function including the maintenance of components within that system)}. A major novel aspect is that not only maintenance on system level is considered, but there already exists an overhaul plan (marked as the schedule of low-cost preventive action moments in Table~\ref{tab:littable}) upfront for the entire asset. This means that the execution of upgrades during the overhauls is typically preferred in order to prevent disruption of asset availability. Upgrades can still be planned at moments when there is no overhaul, but this comes with a  penalty that portrays the severity of the disruption. The effects of such overhaul plans are unexplored in existing literature on age-based maintenance models. {Furthermore, we are the first to consider age-dependent operating costs (to model the disutility of not using the most recent system) within this stream}. 

The second stream of literature relates to the effect of executing upgrades to meet (increasing) functional requirements. This aspect is studied already in a series of papers~\citep{Nair1992,Hopp1994,Nair1995}, where a discrete-time model is proposed in which one needs to decide whether to keep current technology or to move to another version at every time step. Through a Markov Decision Problem formulation, an optimal solution can be determined through dynamic programming, and also the infinite horizon case can be solved through an efficient algorithm. Technology advancement is also considered in~\cite{Rajagopalan1998b}, but for an entirely different environment where technologies provide a certain capacity that is needed for operations. {In this stream, it is common to assume an underlying stochastic model to represent the evolution of technology or the prediction of technological advancements. This is not necessary in our work, where we directly model the disutility of not using the system with the most recent technology.} A continuous-time multi-component maintenance model is considered in~\cite{Mercier2008}, where the goal is to identify when to upgrade all components after a single technology improvement among the class of reactive upgrade policies. Analytical results are derived for a similar problem in~\cite{Oner2015}, but then from the perspective of a maintenance service provider who has to choose between preventive and corrective upgrading for a fleet of systems/assets after a redesign of a component. \cite{Nguyen2013} bridge technology change with spare part inventory management in a discrete-time model, and use dynamic programming to determine the optimal maintenance policy for a fixed horizon. Although this paper is rich in the different aspects that come with upgrades (i.e.{,} potential loss of spare parts, different maintenance options, and so forth), no structural insights on the optimal policy are provided. 

In this paper, we consider a continuous-time  model similar as in~\cite{Mercier2008}. We restrict ourselves to an upgrade policy for a single system, but we allow for continuously evolving technology and preventive upgrade policies (with a minimal repair strategy to deal with failures). Our underlying costs and penalty functions are age-based, and we derive structural insights for the optimal upgrade policy. In addition, we incorporate overhaul moments where large-scale maintenance is executed, and hence it can be cost-effective to execute a system upgrade at such a moment.

\section{Model description}\label{sec:ModelDescription}
% In this section, we provide a formal description of the model. This description requires some mathematical notation, for which we provide an overview in Appendix~\ref{app:Notation}. 
We consider an asset that is in operation for a finite (known) lifetime~$H$. Typically, an asset has many different systems, subsystems and components in place on different indenture levels. We take a simplified view as illustrated in Figure~\ref{fig:AssetDecomposition}. That is, an asset is composed of different systems, and in turn, every system has many different (critical) components that are needed for an operational system. {In practice, the upgrade decision of each system can be decoupled from each other. For example, a frigate as an asset has various systems (e.g., propulsion, navigation, radar, weapon systems) that work independently. That is, the failures of a navigation system do not depend on the failures of a radar system. Also, each system is designed and serviced by a different supplier. Thus, the penalty of not using the most recent version of these systems can be modeled separately in collaboration with different suppliers. Therefore, we aim} to determine a cost-optimal upgrade planning for a single system. We assume that all costs are expressed in monetary units at the start of the planning horizon (so we do not compute net present values). We provide a formal description of our model and notation next. We summarize our notation in Appendix~\ref{app:Notation} as well.

\begin{figure}[htb!]
	\centering
	\caption{Asset decomposition.}
	\label{fig:AssetDecomposition}
	\begin{tikzpicture}[scale=0.8]
		
		\draw (-1,10) rectangle (1,9);
		\draw (0,9.5) node {asset};
		
		\filldraw[fill=black!10!white, draw=black] (-2.5, 8) rectangle (-0.5, 7);
		\draw (-1.5,7.5) node {\small system};
		\draw (-5.5, 8) rectangle (-3.5, 7);
		\draw (-4.5,7.5) node {\small system};
		\draw (2.5, 8) rectangle (4.5, 7);
		\draw (3.5,7.5) node {\small system};
		\draw[thick,dotted]  (0,7.5) -- (2,7.5);
		
		\draw (0,9) -- (-1.5,8);
		\draw (0,9) -- (-4.5,8);
		\draw (0,9) -- (3.5,8);

		\draw (-2.5, 6) rectangle (-0.5,5);
		\draw (-1.5,5.5) node {\scriptsize component};
		\draw (-5.5, 6) rectangle (-3.5, 5);
		\draw (-4.5,5.5) node {\scriptsize component};
		\draw (2.5, 6) rectangle (4.5,5);
		\draw (3.5,5.5) node {\scriptsize component};
		\draw[thick,dotted]  (0,5.5) -- (2,5.5);
		
		\draw (-1.5,7) -- (-4.5,6);
		\draw (-1.5,7) -- (-1.5,6);
		\draw (-1.5,7) -- (3.5,6);
		
	\end{tikzpicture}
\end{figure}
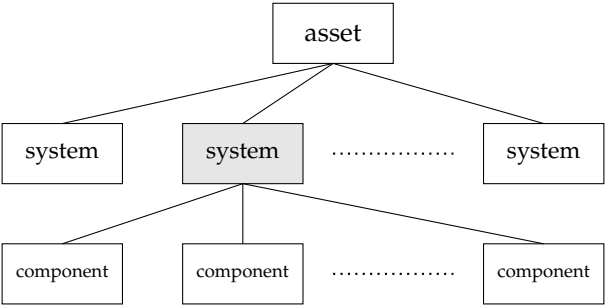

\newpage
The upgrade of a system may interfere with the operations of the entire asset. {It is therefore efficient to perform the upgrade of a system together with other asset-level large-scale maintenance activities, referred to as overhauls. For example, when a frigate is already out-of-service for overhaul, any intervention to the frigate can be performed at no additional cost of taking the frigate out of service (which would be necessary during an upgrade).}  We assume that there exists an overhaul plan with planned moments for the entire asset. 
%Our goal is to provide an upgrade policy for a single system in the asset with consideration of the overhauls. More specifically, we assume that the asset has an overhaul plan with fixed overhaul periods. 
Overhaul periods are relatively short with respect to the total lifetime of the asset. Therefore, we allow for a simplification of the model by assuming that the overhaul takes no time in the planning, and hence the state of the system also has not changed after an overhaul unless an upgrade had been planned. This yields a continuous-time model with given overhaul moments. 

We denote the time between consecutive overhauls by $M_1,\ldots,M_m, M_{m+1}$, where $m$ is the number of overhauls during the asset's lifetime {$H$}. In particular, $M_1$ denotes the time of the first overhaul, and $M_{m+1}$ denotes the duration between the last overhaul and the end of the asset's lifetime. We allow for system upgrades to be executed at times when there is no overhaul, but we penalize these events with fixed costs~$c_d$. Note that if a system upgrade can be executed easily and has no asset-level effect, the setting simplifies to $c_d=0$. We refer to this special setting as the {\it base case}.

As mentioned in the introduction, we model several factors that drive an asset owner to upgrade a system through generalized costs and penalty functions. We explain this in more detail next. First, we say that a system upgrade comes with certain upgrade costs, for which we assume the following.

\begin{assumption}\label{ass:UpgradeCosts}
	The upgrade costs can be written as $c_0-v(t)$, where $c_0 > 0$ can be seen as the price for purchasing and installing the upgraded version of the system, $v(t)$ corresponds to the salvage value of the current system, and $t \geq 0$ is the time since the current system is in use. Moreover, we assume that $c_0 > v(0)$, and that the salvage value $v(t)$ is non-increasing and differentiable in~$t$. 
\end{assumption}

We point out that the price for a system upgrade $c_0$ is constant {over time}, i.e.{,} has no dependency on time. This independence reflects that asset owners are willing to pay a certain fixed price for the newest generation of a system, which is common for capital goods \citep{chien2010effect}. The asset owner may not be willing to pay a higher price than what was paid for an older system with the same asset functionality as when it was first used. From the seller's (e.g., the system supplier who designs, produces and services the system) point of view, increasing the price of a system along the way may lead to losing the account with the customer, or be infeasible due to contractual agreements. The salvage value $v(\cdot)$ corresponds to the possible costs or returns that come with the disposing of the current system in use, and depends only on the time it has been in use. In particular, we make no assumptions on whether the salvage value is positive or negative. We exclude the unrealistic scenarios where $c_0 \leq v(0)$, otherwise it would be profitable to continuously upgrade. The property that the salvage value is non-increasing describes the natural phenomenon that the disposal of an older system becomes less profitable (or costlier) over time. Also in cases where equipment is refurbished or reused, the salvage value can decrease in age since the refurbishment process may involve adjustments and additions of extra components~\citep{Sanoubar2020}. 

As soon as a certain system is installed in the asset, improved versions and upgrades may come to the market. Increasing functional requirements of the current system may lead to the necessity for upgrading to newer versions to meet the desired requirements. We model this effect through a penalty function $c_f(t)$, where $t$ denotes the time that the system is in use since its last upgrade. It represents the gap in functionality between the system in use and the upgraded version, see e.g.{,}~\cite{Sols2012} for an example of such a gap function. Note that by definition there is no gap when the system in use is brand-new, i.e.{,}~$c_f(0)=0$. 

\begin{assumption} \label{ass:penalty}
	The functionality gap function $c_f(t)$ is non-decreasing in $t$ with $c_f(0)=0$, where $t$ denotes the time since the current system is in use, i.e., the last upgrade.
\end{assumption}

{In practice, the penalty function $c_f(\cdot)$ can be directly elicited from the system supplier (who may assess the condition of the latest technology and current capabilities) and the asset owner  (who may assess how critical it is for the entire asset to exploit the benefits of the latest technology in a particular system). For example, the disutility of not using the system with the most recent technology can be a steeply increasing function of time if the system is safety critical (e.g., a software module protecting the asset from failures but becoming prone to cyberattacks over time), or it can be a constant or slowly increasing function if it performs the defined function adequately regardless of its age (e.g., a furnace used for the heat-treatment of steel products). In some cases, historical data can be used to derive $c_f(\cdot)$. For instance, consider a piece of machining equipment, which requires a certain amount of power to perform its task when it is new. The wear caused by its usage may lead to an increase in the cutting force required in the process, and hence, the required cutting power. So, there is an influence of how long the machine has been used on energy consumption. Historical data can be used to identify a function that represents how the power required by the process changes with respect to the usage hours. Such a function can be translated into a penalty function $c_f(\cdot)$ that quantifies the additional energy cost as a function of time. 
}

The system itself consists of many different (critical) components that are subject to failures. We aggregate the failures to a single failure rate for the system, where every failure comes with a certain expected cost. We consider the setting where failures do not occur less likely as time progresses, as well as that the associated costs do not lessen over time. Moreover, we assume that the repair or replacement of a component restores the total failure rate back to the value right before the failure event, also known as a minimal repair strategy~\citep{Barlow1965}.

\begin{assumption}\label{ass:FailureCosts}
	Let $h(t)$ denote the failure rate of the system, and $k(t)$ the expected repair costs to bring the system back to the state just prior to the failure, where $t \geq 0$ denotes the time since {the current system is in use}. The functions $h(t)$ and $k(t)$ are both non-decreasing in $t$. Repairs are immediate.
\end{assumption}

We point out that minimal repair is assumed due to the notion that the system is composed of many components and the failure of one critical component will not (likely) affect the total failure rate of the system{, as also argued in \cite{asadi2022overview}}. {For a particular system that is always sold at the same price, it can be argued that the quality level (i.e., reliability) of the systems will be similar across generations, hence, we assume the failure rate function $h(\cdot)$ remains the same for each system}. {There is an extensive literature in reliability theory to estimate failure rate functions by using historical data (see, e.g., Ch. 3 of  \citealt{gertsbakh2000reliability}). We note that the failure rate function is best determined with the collaboration of the asset owner (who knows how intensively the system is used) and the system supplier (who has the expertise of system designers and/or can pool data from many different users of their systems).} The associated expected costs that come with a failure have a general form, as long as it is non-decreasing in time. These costs are an aggregation of the actual repair and replacement costs and downtime costs, and particularly obsolescence issues for critical components. That is, the non-decreasing property of $k(t)$ mostly stems from obsolescence issues that can arise when critical components are no longer on stock nor produced, and the repair becomes costlier as a more expensive fix needs to be applied.

\begin{remark}
	For a system with many critical different components, it may hold that the failure of the system relates to the failure of one particular component by a certain probability. In this example, the expected repair costs $k(t)$ may be written as
	\begin{align}
		k(t) = \sum_{i=1}^I k_i(t) p_i,
	\end{align}
	where $I$ is the number of (critical) components in the system, $p_i$ is the probability that the failure is due to the breakdown of component~$i$, and $k_i(t)$ is the expected repair cost of a failed component~$i$. {Note that the values of $p_i$ can be determined by using historical data of component failures, and $k_i(t)$ by using the costs associated with component procurement and repairs at different ages of the system, which can be obtained by, e.g., monitoring the spare part prices on the market.} 
\end{remark}

The goal is to find the policy that minimizes the total costs, which is the aggregation of all costs and penalties over the entire asset's lifetime. To formalize this, we introduce the following notation. Under a given policy $\Pi$, let $N$ denote the number of upgrades, $T_1$ the first upgrade moment (if $N>0$), and $T_2,\ldots,T_N$ the time between upgrade moments (if $N\geq 2$). Write $T_{N+1}=H-\sum_{i=1}^N T_i$ as the time between the last system upgrade (if any) and the end of the asset's lifetime, which we also refer to as the remaining lifetime. Let a \textit{cycle} be the time a single system version is in use, i.e.{,} the time between the moment the system is installed till it is disposed off, which is either at the next upgrade moment or at the end of the asset's lifetime.  We define the \textit{cycle costs} $C(T)$ as the aggregation of the salvage value, missing functionality penalty, and the expected costs that come with system failures in a cycle of length $T$, i.e.{,}
\begin{align}\label{eq:CycleCosts}
	C(T) = -v(T) + \int_0^{T} c_f(t) \, dt + \int_0^{T} k(t) h(t) \, dt.
\end{align}
We point out that the price $c_0$ is not included in the cycle costs as the last cycle covers a disposal but not an upgrade. Under policy~$\Pi$, let $S^\Pi$ denote the number of times a system upgrade is executed but not jointly with an overhaul, i.e.{,}
\begin{align*}
	S^\Pi = \left\vert \left\{ n \leq N : \sum_{i=1}^n T_i \neq \sum_{i=1}^j M_i \text{ for all } 1 \leq j \leq m \right\} \right\vert.
\end{align*}
Under a given policy $\Pi$, the total costs $\mathcal{K}^\Pi(H)$ over a lifetime $H$ are given by
\begin{align}\label{eq:TotalCostsGeneral}
	\mathcal{K}^\Pi(H) = S^\Pi c_d + N c_0 + \sum_{i=1}^{N+1} C(T_i) .
\end{align}
{Note that $N,T_1,\ldots,T_N$ are the decision variables in our model.} The objective is to minimize the total costs, which we denote as
\begin{align}
	\mathcal{K}^*(H) = \min_{\Pi} \mathcal{K}^\Pi(H)
\end{align}
with corresponding optimal policy $\Pi^*$ specified by the number of upgrades $N^*$ and the inter-upgrade times $T_i^*, i=1,\ldots,N^*$. 

If $c_d=0$, we change the total costs variable from $\mathcal{K}$ to $\mathcal{C}$ to stress the fact we consider the base case. Under a given policy~$\Pi$, the total costs reduce to
\begin{align}\label{eq:TotalCostsBaseCase}
	\mathcal{C}^\Pi(H) = N c_0 + \sum_{i=1}^{N+1} C(T_i).
\end{align}
We denote $\mathcal{C}^*(H)$ as the minimal total costs in the base case. 

A central question is whether we can derive results on what the optimal policy looks like structurally. A straightforward observation is that it can never be optimal to upgrade more than a finite number of times.

\begin{lemma}
	Let $\bar{N} =(C(H)+v(0))/(c_0-v(0)) \in [0,\infty)$. Any policy $\Pi$ with $N > \bar{N}$ upgrades satisfies $\mathcal{K}^\Pi(H) > \mathcal{K}^*(H)$.
	\label{lem:FiniteNumberOfReplacements}
\end{lemma}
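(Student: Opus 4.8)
The plan is to bracket $\mathcal{K}^\Pi(H)$ between two elementary bounds: from above, the cost of the trivial policy that never upgrades; from below, a bound that grows linearly in $N$ because every upgrade costs at least $c_0-v(0)>0$ while each cycle can be "worth" at most $v(0)$ in savings. The key observation — and essentially the only non-routine step — is the uniform lower bound $C(T)\ge -v(0)$, valid for every cycle length $T\ge 0$; this decouples the cycle-cost sum in~\eqref{eq:TotalCostsGeneral} from the (unknown) number and durations of the cycles.

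First I would record the upper bound $\mathcal{K}^*(H)\le C(H)$. The policy $\Pi_0$ with $N=0$ upgrades has $S^{\Pi_0}=0$ and a single cycle of length $H$, so by~\eqref{eq:TotalCostsGeneral} its total cost is exactly $C(H)$; minimality of $\mathcal{K}^*(H)$ gives the claim.

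Next I would establish the linear lower bound. Since $c_d\ge 0$ and $S^\Pi\ge 0$, from~\eqref{eq:TotalCostsGeneral} we get $\mathcal{K}^\Pi(H)\ge N c_0 + \sum_{i=1}^{N+1}C(T_i)$. In~\eqref{eq:CycleCosts} the two integrals are nonnegative because $c_f$, $h$ and $k$ are nonnegative (with $c_f(0)=0$ and $c_f$, $h$, $k$ non-decreasing), and $-v(T)\ge -v(0)$ because $v$ is non-increasing by Assumption~\ref{ass:UpgradeCosts}; hence $C(T)\ge -v(0)$ for every $T\ge 0$. Summing over the $N+1$ cycles yields $\mathcal{K}^\Pi(H)\ge N c_0 - (N+1)v(0) = N\bigl(c_0-v(0)\bigr) - v(0)$.

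Finally I would combine the two estimates. By Assumption~\ref{ass:UpgradeCosts}, $c_0-v(0)>0$, so $N>\bar N=(C(H)+v(0))/(c_0-v(0))$ is equivalent to $N(c_0-v(0))>C(H)+v(0)$, i.e. $N(c_0-v(0))-v(0)>C(H)$. Chaining the inequalities gives $\mathcal{K}^\Pi(H)\ge N(c_0-v(0))-v(0)>C(H)\ge \mathcal{K}^*(H)$, which is the assertion. For completeness I would note that $\bar N\in[0,\infty)$: the denominator is positive, and $C(H)+v(0)=\bigl(v(0)-v(H)\bigr)+\int_0^H c_f(t)\,dt+\int_0^H k(t)h(t)\,dt\ge 0$ since $v(0)\ge v(H)$ and the integrands are nonnegative, while finiteness of $H$ and of the cost functions makes $\bar N$ finite. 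I do not anticipate a genuine obstacle here; the only point requiring care is the sign bookkeeping around $-v(\cdot)$ and the fact that the bound $C(T)\ge -v(0)$ must hold uniformly in $T$, not just for $T=H$.
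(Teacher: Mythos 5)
Your proof is correct and follows essentially the same route as the paper's: bound $\mathcal{K}^\Pi(H)$ below by $N(c_0-v(0))-v(0)$ using $C(T)\ge -v(0)$, and compare against the never-upgrade policy whose cost is $C(H)$. Your additional check that $\bar{N}\in[0,\infty)$ is a harmless bonus the paper leaves implicit.
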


A short proof of Lemma~\ref{lem:FiniteNumberOfReplacements} can be found in Appendix~\ref{app:proofs}. As a consequence, Lemma~\ref{lem:FiniteNumberOfReplacements} indicates certain scenarios where it is optimal to never upgrade a system during the asset's lifetime. This typically applies to expensive systems with little technological advancements and which are robust to failures.

\begin{corollary}
	If the upgrade costs are very high, it is never optimal {to upgrade. Specifically}, this is the case if $c_0 -2v(0) \geq C(H)$.
\end{corollary}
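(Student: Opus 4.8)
The plan is to reduce the statement to the cost decomposition~\eqref{eq:TotalCostsGeneral} together with two elementary properties of the cycle cost $C$. The crucial observation is that the policy that never upgrades has $N=0$ and $S^\Pi=0$, hence total cost exactly $C(H)$; so it is enough to show $\mathcal{K}^\Pi(H)\geq C(H)$ for every policy $\Pi$ with $N\geq 1$, which then yields $\mathcal{K}^*(H)=C(H)$ with $N^{*}=0$ optimal.

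First I would record that $C$ is non-decreasing with $C(0)=-v(0)$: differentiating~\eqref{eq:CycleCosts} gives $C'(T)=-v'(T)+c_f(T)+k(T)h(T)\geq 0$ by Assumptions~\ref{ass:UpgradeCosts}--\ref{ass:FailureCosts}, and $C(0)=-v(0)$ is immediate. In particular $C(T)\geq -v(0)$ for every $T\geq 0$, and also $C(H)+v(0)=\bigl(v(0)-v(H)\bigr)+\int_0^H c_f(t)\,dt+\int_0^H k(t)h(t)\,dt\geq 0$ (the same non-negativity is implicit in Lemma~\ref{lem:FiniteNumberOfReplacements} via $\bar N\in[0,\infty)$). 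Now fix a policy $\Pi$ with $N\geq 1$. Dropping the non-negative term $S^\Pi c_d$ in~\eqref{eq:TotalCostsGeneral} and bounding each of the $N+1$ cycle costs from below by $-v(0)$ gives $\mathcal{K}^\Pi(H)\geq N c_0-(N+1)v(0)$. Inserting the hypothesis in the form $c_0\geq C(H)+2v(0)$ and using $N\geq 1$,
\[
	\mathcal{K}^\Pi(H)-C(H)\ \geq\ N\bigl(C(H)+2v(0)\bigr)-(N+1)v(0)-C(H)\ =\ (N-1)\bigl(C(H)+v(0)\bigr)\ \geq\ 0,
\]
since $N-1\geq 0$ and $C(H)+v(0)\geq 0$. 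Thus $\mathcal{K}^\Pi(H)\geq C(H)$ for all policies with at least one upgrade, and the corollary follows.

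This is really a mild sharpening of Lemma~\ref{lem:FiniteNumberOfReplacements}: that lemma already excludes $N>\bar N$ with $\bar N=(C(H)+v(0))/(c_0-v(0))$, and the hypothesis $c_0-2v(0)\geq C(H)$ is exactly $\bar N\leq 1$, so the only value of $N\geq 1$ not already covered is the borderline $N=1$ occurring when $\bar N=1$, which the display above disposes of. I do not expect a genuine obstacle; the single point that needs attention is that $v(0)$ may be negative, so one must not discard the $v(0)$-terms but instead let them cancel, exactly as in the identity that produces $(N-1)\bigl(C(H)+v(0)\bigr)$.
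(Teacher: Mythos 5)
Your proof is correct and rests on essentially the same mechanism as the paper, which presents the corollary as a direct consequence of Lemma~\ref{lem:FiniteNumberOfReplacements}, whose own proof uses the identical bound $\mathcal{K}^\Pi(H) \geq N c_0 + (N+1)(-v(0))$ against the never-upgrade cost $C(H)$. Your version is in fact slightly more careful than a literal appeal to that lemma: the hypothesis $c_0 - 2v(0) \geq C(H)$ only gives $\bar{N} \leq 1$, so the lemma by itself excludes only $N \geq 2$ when $\bar{N}=1$, and your display $(N-1)\bigl(C(H)+v(0)\bigr) \geq 0$ correctly disposes of the borderline case $N=1$.
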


Next to the upgrade costs, we can identify two extreme cases with respect to the penalty~$c_d$. The first extreme case is when an upgrade disrupts the asset's operation completely, and an extremely high penalty $c_d$ is incurred. In that case, it would only be rewarding to upgrade during overhauls, which reduces the number of possible upgrade plans significantly. One needs to only decide at the $m$ overhaul moments whether to upgrade or not, where $m$ is finite and relatively small. This can be solved through dynamic programming, which we discuss in more detail in Section~\ref{sec:SolutionAppOnlyOverhauls}.

The other extreme case concerns the base case where $c_d=0$, i.e.{,} no penalty is given to doing upgrades if it does not coincide with an overhaul. This corresponds to systems that can be easily maintained and upgraded without affecting the total operation mode of the asset. It turns out that the optimal policy has a simple structure for a large class of cycle cost functions. We consider this in more detail next.

\section{Analysis of the base case}\label{sec:Analysis}
In this section, we assume that $c_d=0$, i.e., that the presence of overhauls for the asset is not relevant, and determine the optimal upgrade policy for different shapes of the cycle cost function. We use these results as building blocks to design an efficient algorithm to find the optimal policy for the general case where also positive values of the penalty $c_d\geq0$ are allowed. Therefore, we spend significant effort to determine the structure of the optimal upgrade policy in the base case{.}
%, knowing that this is directly related to determining the optimal upgrade policy for the general case as well.
{The proofs of the results in this section can be found in Appendix~\ref{app:proofsBaseCase}.}

Recall that the total cost expression simplifies to~\eqref{eq:TotalCostsBaseCase} in the base case for a given policy~$\Pi$. In view of Lemma~\ref{lem:FiniteNumberOfReplacements}, we can try to derive for every $N \in \mathbb{N}_{\geq 0}$ with $N< \bar{N}$ what the corresponding optimal inter-upgrade times are. If we can answer this question, we can simply compare the total costs for a finite number of upgrades to derive the optimal upgrade policy.

Mathematically, this means that our problem can be rewritten as 
\begin{align}
	\mathcal{C}^*(H) = \min_{0 \leq N \leq \lfloor \bar{N} \rfloor}\left\{ N c_0 +  \min_{T_1+\ldots+T_{N+1}=H} \left\{ \sum_{i=1}^{N+1} C(T_i) \right\} \right\}.
	\label{eq:TotalCostsBaseConvexInitialProblem}
\end{align}

\noindent 
For a fixed $N$, we can determine the optimal upgrade policy by deriving the inter-upgrade times that minimizes 
\begin{equation}
	\sum_{i=1}^{N} C(T_i)+C(H-\sum_{i=1}^{N} T_i),
	\label{eq:obj}
\end{equation}
where $T_i \in [0,H]$ for all $i=1,\ldots,N$. Over the finite interval {$[0,H]$ and for a fixed $N$}, a minimum of \eqref{eq:obj} can only be attained at points where each partial derivative (in $T_i$) is either zero or at the boundaries. 
{However, we know the latter is not possible as the system upgrade costs more than salvaging the system immediately (i.e., for the optimal upgrade policy, it cannot hold that $T_i^*=0$ for some $i=1,\ldots,N$ or $\sum_{i=1}^{N^*} T_i^*=H$ since $c_0>v(0)$ by Assumption~\ref{ass:UpgradeCosts}).} 
%In other words, an optimal upgrade system 
{Therefore, an optimal upgrade policy} must satisfy the property that every partial derivative (in $T_i$) is zero. That is, a necessary condition for optimality is given by
\begin{align}
	C'(T_i^*)= C'\left(H-\sum_{j=1}^N T_j^* \right), \hspace{1cm} \forall i=1,\ldots,N^*,
	\label{eq:NecessaryConditionC}
\end{align}
where $C'(t)$ denotes the first derivative of the cycle cost function $C(t), t\geq 0$. To find the optimal upgrade policy, we are therefore interested in the sets
\begin{align}\label{eq:CandidateSolutionsNUpgrades}
	\mathcal{T}^N = \left\{ (t_1,t_2,\ldots,t_N) \in \mathbb{R}_{>0}^N : \sum_{j=1}^N t_j < H , C'(t_i)= C'\left(H-\sum_{j=1}^N t_j\right), \;\;\; i=1,\ldots,N \right\}, \;\;\; N\geq 1.
\end{align}
These sets represent the candidates for the optimal inter-upgrade times if $N \geq 1$ upgrades would be executed during the lifetime. The optimal policy is therefore contained in the set
\begin{align}
	\mathcal{T} = \bigcup_{N=0}^{\lfloor\bar{N} \rfloor} \mathcal{T}^N,
\end{align}
where $\mathcal{T}^0 = \emptyset$ represents the case where no upgrades are executed.

The size of the sets~$\mathcal{T}^N$ depends on the behavior of the cycle cost function $C(\cdot)$. When we take a closer look {at} our assumptions, we observe that more information can be derived on the behavior of the cycle costs. Recall that the cycle costs are given by~\eqref{eq:CycleCosts}, and the first derivative is thus given by 
\begin{align*}
	C'(T) = -v'(T) + c_f(T) + k(T) h(T) \geq 0.
\end{align*}
{Assumptions~\ref{ass:penalty} and \ref{ass:FailureCosts}} imply that $c_f(T) + k(T) h(T)$ is non-negative and non-decreasing in $T$, and hence this gives rise to a convex term in the cycle cost function. {This means that the cycle cost function $C(\cdot)$} is an aggregation of a convex non-decreasing function, and a general non-decreasing function $-v(T), T\geq0$. Naturally, the salvage value function $v(\cdot)$ can take different forms, leading to a certain behavior for the cycle cost function $C(\cdot)$. In our analysis, we first focus on the special cases where the cycle costs are convex (Section~\ref{sec:ConvexCycleCostsBase}), concave (Section~\ref{sec:ConcaveCycleCostsBase}) or S-shaped (Section~\ref{sec:SShapeCycleCostsBase}). By building on these results, we then consider a generalized structure for the cycle cost function and establish the optimal upgrade policy structure in Section~\ref{sec:GeneralCycleCosts}.

\subsection{Convex cycle costs}\label{sec:ConvexCycleCostsBase}
In this section, we consider the case where the salvage values are such that the cycle costs are convex and non-decreasing. {In practice, there are many real-life scenarios that fall under this case. An example is the case with a constant salvage value, which occurs when a system cannot be resold after use or when the disposal of an older version comes with some cost that is not affected by time.} This setting leads to convex cycle costs. {In some other settings, where reselling is an option, initially the resell value can be  close to the initial price, but as newer technologies come to the market, the resell value decreases rapidly at a non-decreasing speed (i.e., the speed of reduction in the resell value can be constant, leading to a linearly decreasing salvage value; or it can be be increasing, leading to a concave salvage value). In general, we can say a non-increasing concave salvage value function $v(t)$ leads to convex non-decreasing cycle costs. Even if the decrease in the salvage value slows down over time, as long as the moment at which this slowing down occurs before the end of the lifetime of the asset, the cycle cost function continues to be a convex and non-decreasing function.} 

% Special cases include:
% \begin{itemize}
	% 	\item Salvage values that are constant over time;
	% 	\item Salvage values that are linearly decreasing over time;
	% 	\item Salvage value functions $v(\cdot)$ that are concave (and non-increasing) up to lifetime~$H$.
	% \end{itemize}

% An example where constant salvage values occur is when a system cannot be resold after use, or the disposal of an older version comes with some associated costs that is not affected by time. For this setting, the salvage value is constant and leads to convex cycle costs. Convex cycle costs also occur when the salvage value is linearly decreasing or non-increasing concave over time.

% For many scenarios where reselling is an option, the salvage value has the following pattern. Initially, the resell value is close to the initial price. But as newer versions come to the market, this resell values decreases rapidly with increasing speed. Yet, at some point in time, the resell value start to stabilize again. Mathematically, this corresponds to the setting where the salvage value has a reversed S-shape, where $v(t)$ is concave non-increasing up to some point of inflection~$x_v$. After that, the salvage value starts to stabilize, i.e.{,} the salvage value becomes convex non-increasing, after which it is (almost) constant. This implies that the cycle costs are always convex non-decreasing on $[0,x_v]$. In particular, if the system's lifetime $H$ is relatively short and causes the point of inflection to satisfy $x_v \geq H$, then the cycle costs are convex non-decreasing during the entire lifetime of the system. 

If the cycle costs are convex and non-decreasing, it turns out that an optimal upgrade policy for the base model has a  simple structure. Specifically, an optimal strategy to follow is to either never upgrade, or to upgrade a finite number of times $N^*$ with equidistant inter-upgrade times $T_i^* = H/(N^*+1)$, $i=1,\ldots,N^*$. Intuitively, it is clear that it is never optimal to upgrade more than a finite number of times since every upgrade always comes with a certain strictly positive cost. Equidistant inter-upgrade times is a property that comes from the convexity of the cycle costs. {This can be illustrated with a simple example as follows. Suppose that there will only be one upgrade during the asset's lifetime, and this upgrade takes place at time $T$}. Then the total cost is given by $c_0+C(T)+C(H-T)$. The cycle costs are convex, and by definition, this implies that for every $\Delta \in [0,H/2]$,
\begin{align*}
	c_0 + C\left(\frac{H/2-\Delta}{2} \right) + C\left(\frac{H/2+\Delta}{2} \right) \leq  c_0+ 2  C\left(\frac{H/2-\Delta}{2} + \frac{H/2+\Delta}{2}\right) = c_0 + 2 C\left(\frac{H}{2}\right),
\end{align*}
where an equality holds if $\Delta=0$. Therefore, the {total cost is} minimized if $T=H/2$. Similarly, this also holds in general when executing multiple upgrades, {as formalized in Proposition~\ref{prop:EquidistantResult}}.  

\begin{proposition}
	Suppose that the cycle costs are convex. Then, it is either optimal to never upgrade, or to do so for $N^* \in \mathbb{N}$ times with $N^* \leq \bar{N}$. In the latter case, $T_i^*=H/(N^*+1)$, $i=1,\ldots,N^*$ and the minimal total costs are given by
	\begin{align}
		\mathcal{C}^*(H) = \min_{N \in \{0,1,\ldots,\lfloor\bar{N}\rfloor\}} \left\{ Nc_0+(N+1) C\left(\frac{H}{N+1}\right) \right\}.
		\label{eq:TotalCostsForConvexBase}
	\end{align}
	\label{prop:EquidistantResult}
\end{proposition}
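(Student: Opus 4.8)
The plan is to reduce the problem to the finite optimization over the number of upgrades stated in~\eqref{eq:TotalCostsBaseConvexInitialProblem}, and then to dispatch each fixed-$N$ subproblem by a one-line convexity argument.

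First I would invoke Lemma~\ref{lem:FiniteNumberOfReplacements} to restrict attention to policies with at most $\lfloor\bar{N}\rfloor$ upgrades, so that
$$\mathcal{C}^*(H)=\min_{0\le N\le\lfloor\bar{N}\rfloor}\Big\{\,Nc_0+\min_{\substack{T_1,\dots,T_{N+1}\ge 0\\ T_1+\dots+T_{N+1}=H}}\sum_{i=1}^{N+1}C(T_i)\,\Big\},$$
which is exactly~\eqref{eq:TotalCostsBaseConvexInitialProblem}; the index $N=0$ contributes the single term $C(H)$, i.e.\ the policy that never upgrades. It then suffices to evaluate the inner minimum for each fixed $N\ge 1$.

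Next I would fix $N\ge 1$ and apply Jensen's inequality to the convex function $C$: for any feasible $(T_1,\dots,T_{N+1})$,
$$\frac{1}{N+1}\sum_{i=1}^{N+1}C(T_i)\ \ge\ C\!\left(\frac{1}{N+1}\sum_{i=1}^{N+1}T_i\right)=C\!\left(\frac{H}{N+1}\right),$$
with equality precisely when $T_1=\dots=T_{N+1}=H/(N+1)$. Hence the inner minimum equals $(N+1)\,C\!\big(H/(N+1)\big)$ and is attained at the equidistant policy $T_i^*=H/(N+1)$, which is a bona fide policy since all its cycle lengths are strictly positive (no two upgrades coincide, no zero-length cycle). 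Observe that convexity makes the equidistant point optimal whether or not boundary configurations $\{T_i=0\}$ are admitted, so the interior-versus-boundary discussion preceding the proposition is not actually needed for this case.

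Finally, substituting the inner minima back into the reduction gives
$$\mathcal{C}^*(H)=\min_{0\le N\le\bar{N}}\Big\{\,Nc_0+(N+1)\,C\!\left(\frac{H}{N+1}\right)\,\Big\},$$
which is~\eqref{eq:TotalCostsForConvexBase}, where I used that $N$ is an integer so that $N\le\bar{N}$ and $N\le\lfloor\bar{N}\rfloor$ are equivalent. If the minimizing index equals $0$ it is optimal never to upgrade; otherwise any integer $N^*\in[1,\bar{N}]$ attaining the minimum yields an optimal policy with equidistant inter-upgrade times $T_i^*=H/(N^*+1)$, completing the proof. I do not expect a genuine obstacle here: the whole argument is the reduction of the first step (supplied by Lemma~\ref{lem:FiniteNumberOfReplacements}) together with the Jensen step, and the only point requiring a little care is spelling out that equality in Jensen's inequality is achieved exactly at the equidistant configuration.
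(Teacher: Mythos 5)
Your proof is correct and follows essentially the same route as the paper's: reduce to the finite minimization over $N$ via Lemma~\ref{lem:FiniteNumberOfReplacements}, then solve each fixed-$N$ subproblem by convexity of $C$, which you make explicit through Jensen's inequality where the paper simply asserts it. The only minor overstatement is that equality in Jensen holds ``precisely'' at the equidistant configuration --- for merely convex (e.g.\ locally affine) $C$ other minimizers may exist --- but this does not affect the conclusion that the equidistant policy attains the minimum.
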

{Note that $\bar{N}$ is the upper bound on the optimal number of upgrades as characterized in Lemma~\ref{lem:FiniteNumberOfReplacements}. Proposition~\ref{prop:EquidistantResult} shows that the problem of finding the optimal upgrade times is equivalent to the problem of finding the optimal number of (equidistant) upgrades. That is, the optimal upgrade policy can be found in practice by identifying the value of $N \in \{0,1,\ldots,\lfloor \bar{N}\rfloor \}$ that minimizes $\mathcal{C}^N(H)$ for a given asset lifetime $H$, where}
\begin{align}\label{eq:TotalCostsBaseNUpgrades}
	\mathcal{C}^N(H) = Nc_0+(N+1) C\left(\frac{H}{N+1}\right). 
\end{align}
\noindent

% We point out that this result already follows directly from~\eqref{eq:CandidateSolutionsNUpgrades} if the cycle costs are strictly convex.

%That is, for every $s,t$ with $0<s<t<H$ it holds that $C'(s) < C'(t)$ and hence $C'(s)=C'(t)$ can only hold if $s=t$. Therefore, $\mathcal{T}^N = \{H/(N+1)\}$ and the conclusion follows from~\eqref{eq:TotalCostsBaseConvexInitialProblem}. Similarly, for cycle costs that are convex, we see that $\sum_{i=1}^{N+1} C(T_i)$  is the summation of $N+1$ identical convex non-decreasing functions in $T_i$. This is minimized for $T_1=\ldots=T_{N+1}=H/(N+1)$ under the constraint $\sum_{i=1}^{N+1} T_i=H$, although this solution is not necessarily unique if the cycle costs are not \textit{strictly} convex. Again, the conclusion in Proposition~\ref{prop:EquidistantResult} then follows from~\eqref{eq:TotalCostsBaseConvexInitialProblem}.

In case that {the salvage value of a new system is close to the upgrade price (i.e.,} $c_0-v(0)$ is small{), the value of $\bar{N}$ can be high. Thus, it may be necessary to check a large number of possible $N$ values to be able to confirm the optimality}. 
{In Lemma~\ref{lem:ConvexityOfCN}, we show that the function $C^N(H)$ has a discrete convex structure in $N$ for a given asset lifetime $H$.} 

\begin{lemma}
	If the cycle cost function is convex, then the function $\mathcal{C}^N(H)$ is {discrete} convex in $N$ {for a given $H$}. 
	\label{lem:ConvexityOfCN}
\end{lemma}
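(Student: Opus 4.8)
The plan is to show that $\mathcal{C}^N(H)$, viewed as a function of a continuous variable $N \geq 0$, is convex by treating it as a sum of two terms and arguing each is convex. Write $\mathcal{C}^N(H) = Nc_0 + (N+1)C\!\left(\tfrac{H}{N+1}\right)$. The term $Nc_0$ is linear, hence convex, so it suffices to prove that $g(N) := (N+1)C\!\left(\tfrac{H}{N+1}\right)$ is convex in $N$. A clean way to do this is the substitution $u = N+1 > 0$, so that $g$ becomes $\tilde{g}(u) = u\,C(H/u)$; convexity of $\tilde g$ in $u$ is equivalent to convexity of $g$ in $N$ since the two differ by a translation.

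The key step is the observation that the \emph{perspective} of a convex function is convex: if $C$ is convex on $[0,H]$, then the map $(u,t) \mapsto u\,C(t/u)$ is jointly convex on $\{u > 0,\, 0 \le t \le uH\}$, and in particular $u \mapsto u\,C(H/u)$ — obtained by fixing $t = H$ — is convex. If the paper prefers not to invoke the perspective transform as a black box, I would instead argue directly: $\mathcal{C}^N(H) - Nc_0 = (N+1)C\!\left(\tfrac{H}{N+1}\right)$ is exactly the optimal cycle-cost total $\min\{\sum_{i=1}^{N+1} C(T_i) : \sum T_i = H\}$ from Proposition~\ref{prop:EquidistantResult} (for convex $C$ the minimum is attained at the equidistant point). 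One can then show this value function is convex in $N$ by a direct interpolation argument: given optimal partitions for $N_1+1$ and $N_2+1$ cycles, and $\lambda \in [0,1]$ with $\lambda N_1 + (1-\lambda)N_2 = N$ an integer, "merge" the two partitions into one with $N+1$ cycles whose total $C$-cost is at most the corresponding convex combination — here convexity of $C$ is used to bound the cost of a merged cycle by the convex combination of the costs of the cycles being merged. Since $\mathcal{C}^N(H)$ is only defined for integer $N$, I would phrase convexity in the discrete sense, i.e. $\mathcal{C}^{N+1}(H) + \mathcal{C}^{N-1}(H) \ge 2\mathcal{C}^N(H)$, which follows from the continuous convexity of $\tilde g$ restricted to integer points, or can be checked directly via the midpoint inequality for $\tilde g$ at $u-1, u, u+1$.

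If one wants to avoid any appeal to the perspective transform entirely, the most elementary route is to compute: with $f(u) := u\,C(H/u)$, we have $f''(u) = \tfrac{H^2}{u^3}\,C''(H/u) \ge 0$ whenever $C$ is twice differentiable (the first-derivative terms cancel), and a standard limiting/mollification argument extends this to general convex $C$. This computation is short enough to include.

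The main obstacle I anticipate is purely one of hygiene rather than depth: $\mathcal{C}^N(H)$ is an integer-indexed quantity, so "convex in $N$" must be interpreted correctly (as midpoint/second-difference convexity on the integers, or as the restriction of a genuinely convex function of a real variable), and one must make sure the interpolation or second-difference argument only uses convexity of $C$ on $[0,H]$ — which is exactly what the hypothesis provides — without implicitly needing it on a larger domain. Handling the non-smooth case (convex but not $C^2$) cleanly is the only mild subtlety; the perspective-function viewpoint sidesteps it, which is why I would lead with that.
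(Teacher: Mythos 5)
Your argument is correct, but it proceeds along a genuinely different route from the paper. The paper works entirely at the discrete level: it writes out the second difference
\[
\Delta^2 \mathcal{C}^N(H) = (N+1)\Bigl(C\bigl(\tfrac{H}{N+1}\bigr)-C\bigl(\tfrac{H}{N+2}\bigr)\Bigr) - (N+3)\Bigl(C\bigl(\tfrac{H}{N+2}\bigr)-C\bigl(\tfrac{H}{N+3}\bigr)\Bigr),
\]
subdivides $[H/(N+2),H/(N+1)]$ into $N+3$ and $[H/(N+3),H/(N+2)]$ into $N+1$ subintervals of the common length $H/\bigl((N+1)(N+2)(N+3)\bigr)$, and telescopes to express $\Delta^2\mathcal{C}^N(H)$ as a double sum of terms $\bigl(C(y_i)-C(y_{i-1})\bigr)-\bigl(C(z_j)-C(z_{j-1})\bigr)$, each non-negative because convexity makes increments of $C$ over equal-length intervals non-decreasing as the interval moves right. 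Your lead argument instead recognizes $(N+1)C\bigl(H/(N+1)\bigr)$ as the perspective $u\mapsto uC(H/u)$ of $C$ evaluated at $u=N+1$, and invokes the standard fact that the perspective of a convex function is convex (with the correct observation that $u\ge 1$ keeps $H/u$ inside $[0,H]$, so only convexity on $[0,H]$ is used); restricting to integers then gives exactly the discrete second-difference inequality the paper proves. Your approach is shorter and requires no differentiability once the perspective transform is taken as known, whereas the paper's subdivision argument is self-contained and elementary at the cost of a more computational proof; your $f''(u)=\tfrac{H^2}{u^3}C''(H/u)$ calculation is a fine fully explicit substitute in the smooth case. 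I would caution you only about your secondary ``merging'' argument: redistributing the $N$ and $N+2$ optimal cycles into two feasible $(N+1)$-cycle partitions with controlled cost is not as immediate as you suggest (merging two cycles into one does not obviously decrease cost when $C(0)=-v(0)$ may be nonzero), so I would not lean on it; but since your primary perspective-function argument is complete, the proof stands.
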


{For an asset with lifetime $H$, the discrete convex structure of $C^N(H)$ in $N$ can be exploited to speed up the process of searching for the optimal number of upgrades~$N^*$ by providing an optimality guarantee. To be specific,} Proposition~\ref{prop:EquidistantResult} and Lemma~\ref{lem:ConvexityOfCN} imply that in order to find an optimal number of upgrades~$N^*$, we need to determine the final integer $N$ before $\mathcal{C}^N(H)$ increases. 

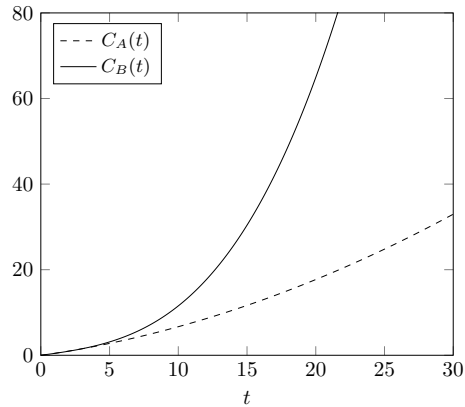
\begin{figure}[htb!]
	\centering
	\caption{Cycle costs for $t\in[0,H]$.}
	\label{fig:Example1Functions}
	\begin{tikzpicture}[scale=0.8]
		\begin{axis}[domain=0:30, xmin=0, xmax=30,ymin=0,ymax=80,samples=300,legend style={at={(1.1,1)}, anchor=north west},legend pos=north west,no
			marks,legend entries={\small $C_A(t)$,\small $C_B(t)$},xlabel=$t$]
			
			\addplot[dashed] {\x/3+3/16*(\x/3)^2 + 1/10*\x^(1.1)};
			\addplot[] {\x/3+3/16*(\x/3)^3 + 1/10*\x^(1.1)};
			
		\end{axis}
		
	\end{tikzpicture}
\end{figure}

\begin{example}\label{ex:MainExample}
	Consider two settings, where in both settings $H=30$ years and $c_0=4$. In setting~A, let the cycle costs be given by
	\begin{align}
		C_A(t) = \frac{t}{3} + \frac{3}{16} \left(\frac{t}{3}\right)^2 + \frac{1}{10}
		t^{1.1},
	\end{align}
	and in setting~B, let
	\begin{align}
		C_B(t) = \frac{t}{3} + \frac{3}{16} \left(\frac{t}{3}\right)^3 + \frac{1}{10}
		t^{1.1}.
	\end{align}
	In Figure~\ref{fig:Example1Functions}, we plot the cycle costs as a function of time.
	
	Note that the cycle costs are strictly convex on $[0,H]$ for both settings. In view of Proposition~\ref{prop:EquidistantResult} and Lemma~\ref{lem:ConvexityOfCN}, we can conclude that the (unique) optimal policy is to upgrade $N^*$ times after every $H/(N^*+1)$ years, where $N^*$ is the final integer before $C^N(H)$ increases. Using Table~\ref{tab:FirstIllustrativeExample}, we observe that it is optimal to execute an upgrade once ($N_A^*=1$) after 15 years in setting~A. In setting~B, it is optimal to upgrade $N_B^*=4$ times after every 6 years.
	
	\begin{table}[htb!]
		\centering
		\caption{Total costs values for setting A and B.} 
		\label{tab:FirstIllustrativeExample}
		\begin{tabular}{ | l | l | l |}
			\hline
			$N$ & $\mathcal{C}^N_A(H)$ & $\mathcal{C}^N_B(H)$\\ 
			\hline
			0 & 32.9653 & 201.7153 \\
			1 & 27.3081 & 64.8081 \\
			2 & 28.0268 & 42.6101 \\
			3 & 30.3572 & 37.3884 \\
			4 & 33.3387 & 37.0887 \\
			5 & 36.6489 & 38.7322 \\
			\hline
		\end{tabular}
	\end{table}

\end{example}

\subsection{Concave cycle costs}\label{sec:ConcaveCycleCostsBase}
In this section, we consider the case {where the salvage values are such that the cycle costs are concave and non-decreasing.  In practice, there can be settings where the salvage value of a new system rapidly decreases as soon it is installed to the asset, and the speed of decrease in the salvage value can slow down over time (this is the direct opposite of the case in Section~\ref{sec:ConvexCycleCostsBase}), leading to a convex non-increasing salvage value function $v(t)$. Recall that the first derivative of the cycle cost function $C(t)$ is given by $C'(t)=-v'(t) + c_f(t) + k(t) h(t)$. That is, when  the rate of increase in the costs associated with the functionality gap and the component failures over time (captured by the term $c_f(t) + k(t) h(t)$) is small relative to the rate of loss in the salvage value, we observe  concave cycle cost functions. For example, consider a system that has a convex non-increasing salvage value with no functionality gap penalty (i.e., $c_f(t)=0$). This may be the case when the operator of the asset only requires an operating system but no more than that, even if the technology increases and there are more efficient systems in the market. If such a system has components that fail at a constant rate (e.g., electronic components) and can be repaired at similar costs regardless of the age of the system, we end up with concave cycle costs.}

% Due to our assumptions, a necessary condition for this case to occur is that the salvage value is convex (non-increasing) over time. Implicitly, this also means that the functionality gap and the costs due to failures should not increase too rapidly relative to the salvage values. Special instances that lead to concave cycle cost functions are
% \begin{itemize}
	% 	\item Constant cycle cost function;
	% 	\item Convex salvage value functions and $c_f(t)+k(t)h(t)$ is constant for all $t\in [0,H]$;
	% \end{itemize}

For concave cycle cost functions, it is optimal to never upgrade. Intuitively, this statement can be explained as follows. If it would be optimal to execute at least one upgrade during the lifetime, then due to the concavity of the cycle cost function, an optimal strategy would be to immediately execute all upgrades at the start of the asset's lifetime. But since an upgrade always comes with a positive cost, this implies it is best to never upgrade at all. {We formalize this result in Lemma~\ref{lem:ConcaveNeverUpgrade}.}

\begin{lemma}\label{lem:ConcaveNeverUpgrade}
	Suppose that $C(t)$ is concave for all $t \in [0,H]$. Then, it is optimal {to never upgrade}.
\end{lemma}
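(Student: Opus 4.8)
The plan is to show that for any policy $\Pi$ with $N \geq 1$ upgrades, the total cost $\mathcal{C}^\Pi(H)$ is strictly larger than the cost of the no-upgrade policy, namely $C(H)$. Fix such a policy with inter-cycle lengths $T_1, \ldots, T_{N+1}$ summing to $H$. The key inequality is that concavity of $C$ on $[0,H]$, together with $C(0) = -v(0)$ (which may be nonzero), gives a superadditivity-type bound. Specifically, I would first establish that for concave $C$ with $T_1 + \cdots + T_{N+1} = H$ one has
\begin{align*}
	\sum_{i=1}^{N+1} C(T_i) \geq C(H) + N\,C(0).
\end{align*}
To see this, write each $T_i = \lambda_i H$ with $\lambda_i \in [0,1]$ and $\sum_i \lambda_i = 1$. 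Concavity of $C$ gives $C(T_i) = C(\lambda_i H + (1-\lambda_i)\cdot 0) \geq \lambda_i C(H) + (1-\lambda_i) C(0)$. Summing over $i = 1, \ldots, N+1$ yields $\sum_i C(T_i) \geq C(H) + N C(0)$, since $\sum_i \lambda_i = 1$ and $\sum_i (1-\lambda_i) = (N+1) - 1 = N$.

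Next I would combine this with the cost formula~\eqref{eq:TotalCostsBaseCase}:
\begin{align*}
	\mathcal{C}^\Pi(H) = N c_0 + \sum_{i=1}^{N+1} C(T_i) \geq N c_0 + C(H) + N C(0) = C(H) + N\big(c_0 + C(0)\big).
\end{align*}
Since $C(0) = -v(0) + 0 + 0 = -v(0)$ from~\eqref{eq:CycleCosts}, we have $c_0 + C(0) = c_0 - v(0) > 0$ by Assumption~\ref{ass:UpgradeCosts}. Hence for any $N \geq 1$, $\mathcal{C}^\Pi(H) \geq C(H) + N(c_0 - v(0)) > C(H)$. Thus every policy with at least one upgrade is strictly worse than the policy with no upgrades, so it is optimal never to upgrade.

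The argument is essentially a one-line convexity estimate, so there is no serious obstacle; the only point requiring care is the bookkeeping of the constant term $C(0) = -v(0)$, which is not assumed to vanish (unlike $c_f(0) = 0$ or the functionality gap), and making sure the count of $(1-\lambda_i)$ terms is exactly $N$ rather than $N+1$. One should also note the degenerate case $N = 0$ is vacuous and that the inequality $c_0 > v(0)$ is exactly what is needed to get strictness, tying the conclusion cleanly back to the modelling assumptions. This also recovers, as a sanity check, the intuition stated before the lemma: the bound is tight when all $T_i$ but one are pushed to $0$, i.e.\ all upgrades are executed at the start of the lifetime.
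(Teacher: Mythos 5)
Your proof is correct and follows essentially the same route as the paper's: both arguments reduce to the observation that concavity forces $\sum_i C(T_i) \geq N\,C(0) + C(H)$ (the paper phrases this as the sum being minimized at the corner point $T_1=\cdots=T_N=0$, $T_{N+1}=H$), and then conclude via $c_0 > v(0)$. Your explicit Jensen-type derivation of that bound is, if anything, a slightly more careful justification of the step the paper merely asserts.
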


\subsection{S-shaped cycle costs}\label{sec:SShapeCycleCostsBase}
{Section~\ref{sec:ConvexCycleCostsBase} considers convex and non-decreasing cycle cost functions, where the increase in the costs grows over time. Afterward, Section~\ref{sec:ConcaveCycleCostsBase} considers concave and non-decreasing cycle cost functions, where the increase in the costs gets slower over time. While there are realistic settings where each behavior is plausible, there are also cases where the cycle cost function shows a combination of these behaviors.}
Suppose that the increase in cycle cost grows fast at the early phases of the life of the system, then the growth stabilizes, and finally, the growth becomes slower. This behavior for the cycle cost is referred to as an S-shaped function.
In this section, we consider the case that the cycle cost function is S-shaped, i.e.{,} there exists a point of inflection $x \in [0,H]$ such that the cycle costs are convex on the interval $[0,x]$ and concave on $[x,H]$. 
%{Figure~\ref{fig:S-shaped cycle cost example} illustrates a specific S-shaped function with a unique inflection point.}

%\begin{figure}[h]
%    \centering
%    \includegraphics[scale=0.425]{S_shape_func.pdf}
%    \caption{{S-shaped cycle cost function $C(t) =-1+\left( 1+\exp^{-({t-x})} \right)^{-1}$ with inflection point $x=10$}.}
%    \label{fig:S-shaped cycle cost example}
%\end{figure}

For a typical scenario where S-shaped cycle costs may emerge, one can think of an electronic device for which upgraded versions with technological improvements appear on the market over time, making the older version less valuable. The salvage value initially decreases slowly, but the drop increases as newer versions appear. This effect is followed by a period where the drop in the salvage value gets smaller again. In other words, the salvage value function of the electronic device has a reversed S-shape. Depending on the behaviors of the missing functionality penalty and expected failure costs, this in turn may result in S-shaped cycle costs.

\begin{remark}
	The point of inflection $x \in [0,H]$ does not necessarily have to be uniquely defined. For example, if the cycle costs are linearly increasing on $[a,b] \subset [0,H]$ with $a<b$, strictly convex on $[0,a]$ and strictly concave on $[b,H]$, then any $x \in [a,b]$ can be chosen as the point of inflection. Yet, in the results that are provided in this section, we require the point of inflection $x$ to be chosen in a (unique) specific manner. That is, we say that $x$ is chosen such that 
	\begin{align}\label{eq:TecnicalRequirement}
		C(x+\Delta)-C(x) < C'(x) \Delta, \hspace{2cm} \forall \Delta>0.
	\end{align}
	For the mentioned example, this implies that the point of inflection is chosen to be equal to~$b$. In the remainder of this paper, points of inflection that satisfy this property~\eqref{eq:TecnicalRequirement} will be referred to as satisfying the \textit{technical requirement}.  
\end{remark}

It turns out that in the case of S-shaped cycle costs, there is an optimal policy that has a relatively simple structure. It has one of the following structures:
\begin{enumerate}
	\item It is optimal to never upgrade;
	\item It is optimal to upgrade $N^* \geq 1$ times after every $H/(N^*+1)$ time units;
	\item It is optimal to upgrade $ N^* \geq 1$ times after every $T  < H/(N^*+1)$ time units, where $T$ differs from the time between the final upgrade and the end of the lifetime.
\end{enumerate}

The reason that this holds already follows implicitly from the results in the two previous sections. Suppose there is an optimal policy where we upgrade $N^*=N_1+N_2-1$ times, where $N_1$ and $N_2$ are the number of times we have inter-upgrade times in the interval $[0,x]$ and $(x,H]$, respectively (where we slightly abuse definitions by including the time between the last upgrade and the end of the lifetime as an inter-upgrade time). Since the cycle costs are concave on $(x,H]$, {it follows from our analysis in Section~\ref{sec:ConcaveCycleCostsBase} that} whenever $N_2 \geq 2$, there exists an optimal policy where at least one of the $N_2$ inter-upgrade times lies at the boundary of the interval $(x,H]$. This is not possible, and therefore it must hold that $N_2 \leq 1$. In particular, if $N_2=1$, without loss of generality, we can set the corresponding inter-upgrade time as the time between the final upgrade and the end of the lifetime, i.e.{,}~as $T_{N^*+1}$. Moreover, {it follows from our analysis in Section~\ref{sec:ConvexCycleCostsBase} that} there exists an optimal upgrade policy where the $N_1$ inter-upgrade times in interval $[0,x]$ equal one another (if any) since the cycle costs are convex on this interval. 

In order to prove this formally, we require some additional results on the optimal policy structure, {see Appendix~C for details.} {To be specific, we first show in Lemma~\ref{lem:SShapedPropertyUpgradeTime} that, if $N^*\geq 1$,} there exists an optimal policy where no inter-upgrade time lies strictly between $\min\{x,H/(N^*+1)\}$ and $\max\{x,H/(N^*+1)\}$. {In Lemma~\ref{lem:NotOnlyUpgradeAfterTurningPoint}, we then show} there exists an optimal policy that always contains at least one inter-upgrade time that is at most equal to the point of inflection $x$ of the cycle cost function{, i.e., $T_i^*\leq x$ for some $i=1,\ldots,N^*+1$.}
%
% \begin{lemma}\label{lem:SShapedPropertyUpgradeTime}
	% 	Let the cycle costs have an S-shape with point of inflection $x \in (0,H)$ that satisfies the technical requirement. If $N^*\geq 1$, then there exists an optimal upgrade policy where for all $i=1,\ldots,N^*+1$ it holds that $T_i^* \not\in \left(\min\{x,H/(N^*+1)\},\max\{x,H/(N^*+1)\} \right)$.
	% \end{lemma}
%
% Secondly, it turns out that if $N^*\geq 1$, then there exists an optimal policy that always contains at least one inter-upgrade time that is at most equal to the point of inflection $x$. 
%
% \begin{lemma}
	% 	Let the cycle costs have an S-shape with point of inflection $x \in (0,H)$, where $x$ satisfies the technical requirement. If $N^* \geq 1$, then there exists an optimal policy for which $T_i^* \leq x$ for some $i=1,\ldots,N^*+1$.
	% 	\label{lem:NotOnlyUpgradeAfterTurningPoint}
	% \end{lemma}
{By using these results, Proposition~\ref{prop:OptimalUpgradePolicyBase} shows the main result of this section, which gives insight {into} the structure of the optimal upgrade policy.}

\begin{proposition}
	Let the cycle costs have an S-shape with point of inflection $x \in (0,H)$ satisfying the technical requirement. It is either optimal to execute no upgrades, or there exists an optimal policy with $N^*\geq 1$ upgrades that has one of two structures: either $T_1^*=\ldots=T_N^* = H/(N{^*}+1) = T_{N{^*}+1}^*$, or $T_1^*=\ldots=T_N^* \leq x <T_{N{^*}+1}^*$. 
	\label{prop:OptimalUpgradePolicyBase}
\end{proposition}

{Besides providing an analytical insight on the structure of the optimal upgrade policy, Proposition~\ref{prop:OptimalUpgradePolicyBase} is useful as it establishes that the set of optimal upgrade policies can be} simplified to the union of the set $\mathcal{T}^0$ (no upgrading) and the sets
\begin{align}
	\mathcal{T}^{N_1,N_2} = \left\{ (t_1,t_2) \in \mathbb{R}^2 : 0<t_1 \leq x < t_2 <H, N_1 \in \mathbb{N}, N_2 \in \{0,1\}, N_1 t_1 +N_2 t_2 = H , C'(t_1)= C'(t_2) \right\},
	\label{eq:OptimalCandidatesSShape}
\end{align}
where $N_1$ is bounded due to Lemma~\ref{lem:FiniteNumberOfReplacements}. To illustrate how this result can be used to determine an optimal upgrade policy, we consider the following example.

\begin{example}
	\label{exmS}
	Suppose that $H=30$, $c_0\geq 1$, $c_f(t)+k(t)h(t)= 0$ and
	\begin{align*}
		C(t) = -v(t) = -1+\left( 1+\exp^{-({t-10})} \right)^{-1}. 
	\end{align*}
	%{As illustrated in Figure~\ref{fig:S-shaped cycle cost example},} t
	This is an S-shaped function with $C(t)=-v(t) \in (-1,0)$ for all $t \in [0,H]$ and (unique) point of inflection $x=10$. Note that
	\begin{align*}
		C'(t)=-v'(t) =\frac{\exp^{-({t}-10)}}{\left( 1+\exp^{-({t}-10)} \right)^2}{.}
	\end{align*}
	This derivative satisfies $C'(t) = C'(20-t)$ for all $t \in [0,10]$ and is strictly decreasing for $t \geq 10$. In particular, this implies that there exists no optimal upgrade policy where $T_i \in (20,30)$ for some $i=1,\ldots,N+1$. In view of~\eqref{eq:OptimalCandidatesSShape}, we observe that for any $t_2=20-t_1$ with $t_1 \in [0,10)$ and $N_2=1$,
	
	\begin{align*}
		N_1 t_1+N_2 t_2 = H \hspace{1cm} \Rightarrow \hspace{1cm} t_1 = \frac{10}{N_1-1}=\frac{10}{N-1}.
	\end{align*}
	Therefore, the possible candidates for the optimal upgrade policy are $\mathcal{T}^0$ and
	\begin{align*}
		\mathcal{T}^{N_1,0} = \left\{ \left(\frac{30}{N+1} , 0\right) \right\}, \;\;\; N \in \mathbb{N}_{\geq 2} \hspace{1.5cm} \mathcal{T}^{N_1,1} = \left\{ \left(\frac{10}{N-1} , 20 - \frac{10}{N-1} \right) \right\}, \;\;\; N \in \mathbb{N}_{\geq 2}.
	\end{align*}	
	
	We point out that any policy $\Pi \in \mathcal{T}^{N_1,1} $ leads to total costs that satisfy $\mathcal{C}^{\Pi}(H) \geq 0 > C(H)$ for every $c_0\geq1$, i.e.{,} every policy $\Pi \in \mathcal{T}^{N_1,1} $ leads to a higher total costs than when upgrades would never be executed and hence the optimal upgrade policy will never be contained in the set $\mathcal{T}^{N_1,1} $. Therefore, depending on the value of $c_0$, the optimal policy is to either never upgrade, or to upgrade $N\geq 2$ times with equidistant upgrade times that equals the remaining lifetime (i.e.{,} the time between the final upgrade and the end of the lifetime).
\end{example}

\begin{figure}[t]
	\centering
	\includegraphics[scale=0.55]{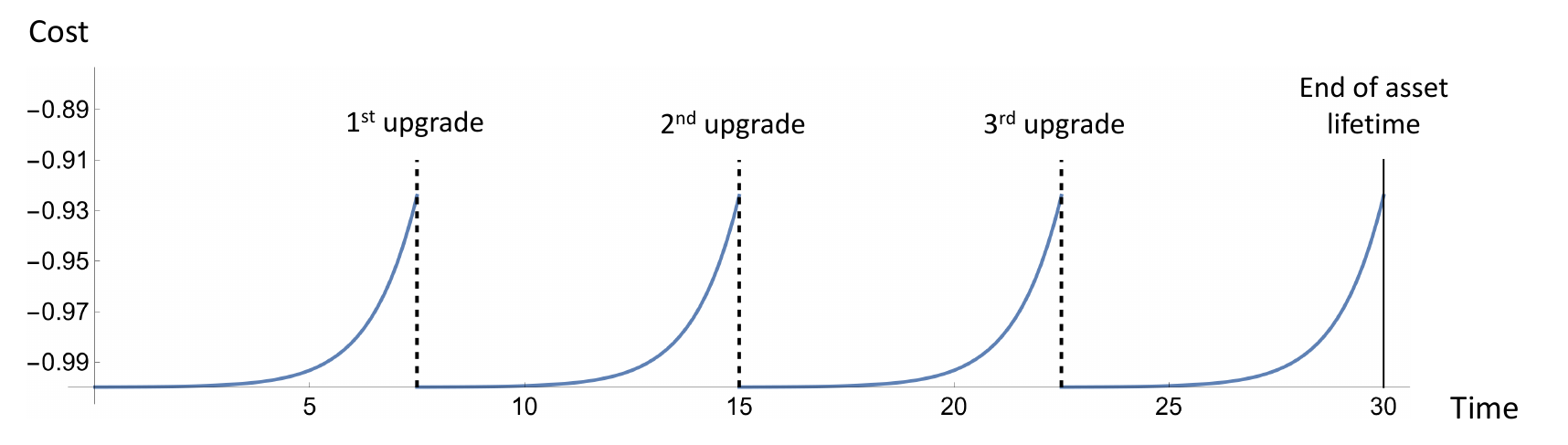}
	\caption{$N=3$ upgrades at every $7.5$ time units. Total cost is $3 \, c_0 + 4 \, C(7.5)= -0.697$ for $c_0=1$.}
	\label{fig:N3}
\end{figure}

% \begin{figure}
	%     \centering
	%     \includegraphics[scale=0.55]{4upgrade_example3.pdf}
	%     \caption{{$N=4$ upgrades at every $6$ time units. Total cost is $4 \, c_0 + 5 \, C(6)= -0.967$ for $c_0=1$.}}
	%     \label{fig:N4}
	% \end{figure}

{As shown in Example~\ref{exmS}, equidistant upgrade times can be optimal under an S-shaped cycle cost function. In Figure~\ref{fig:N3}, we illustrate the structure of the policy that upgrades three times (i.e., at every 7.5 time units)  for the instance introduced in Example~\ref{exmS}. Note that the costs of the corresponding policy can be easily calculated, and this can be repeated for other $N$ values to identify the optimal upgrade policy. Section~\ref{sec: solappbasecase} will introduce an efficient solution algorithm to identify the optimal number of upgrades $N^*$ by exploiting the analytical results established so far.}

To illustrate that it is possible to have a (unique) upgrade policy where the remaining time between the last upgrade and the end of the lifetime differs from the (other) inter-upgrade times, we provide the following example.

\begin{example}\label{ex:NotEquidistantBase}
	Consider a setting where $H=10$ years, and the system is not subjective to failure, i.e.{,} $h(t)=0$ for all $t\geq0$. Initially, the salvage value is given by $v(0)=0.15$. After $s=4.9$, an upgrade comes to the market against upgrade costs $c_0=0.75$. This event causes the missing functionality penalty to jump to $c_f(t)=0.15$ for all $t\geq 4.9$ (while $c_f(t)=0$ for all $t<4.9$). Moreover, this event leads to the salvage value to decrease rapidly till time $t=5$, after which reselling the old system yields no return. More specifically, the salvage value is given by
	\begin{align}
		v(t) = \left\{ \begin{array}{ll}
			0.15 & \textrm{if } t \leq 4.9, \\
			0.15 - 30 (t-4.9)^2 & \textrm{if } 4.9 \leq t \leq 4.95, \\
			30 (5-t)^2 & \textrm{if } 4.95 \leq t \leq 5, \\
			0 & \textrm{if } t \geq 5.
		\end{array}\right.
	\end{align}
	This leads to an S-shaped continuous cycle cost function, given by
	\begin{align}
		C(t) = \left\{ \begin{array}{ll}
			-0.15 & \textrm{if } t \leq 4.9, \\
			-0.15 + 30 (t-4.9)^2 +  0.15 (t-4.9) & \textrm{if } 4.9 \leq t \leq 4.95, \\
			-30 (5-t)^2 +  0.15 (t-4.9)& \textrm{if } 4.95 \leq t \leq 5, \\
			0.15 (t-4.9) & \textrm{if } t \geq 5.
		\end{array}\right.
	\end{align}
	The point of inflection is thus given by $x=4.95$. {This cycle cost function is illustrated in Figure~\ref{fig:cycle cost example}.}
	
	\begin{figure}[h]
		\centering
		\includegraphics[scale=0.39]{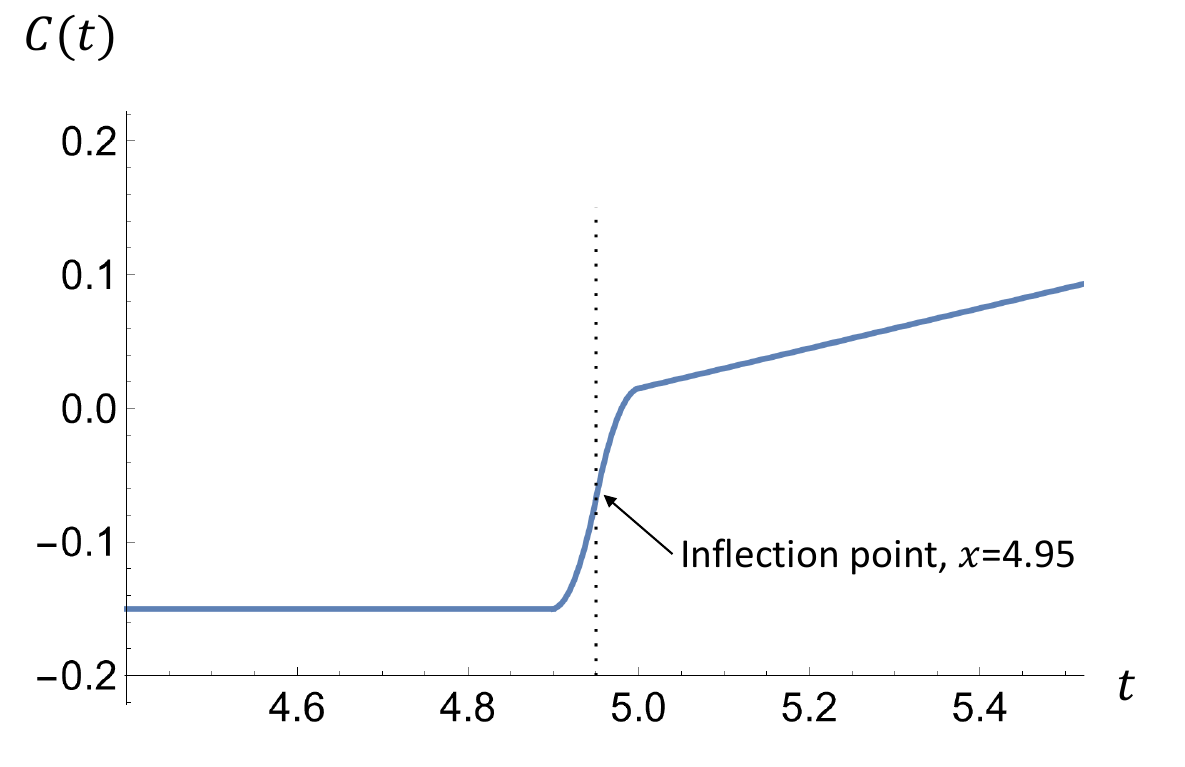}
		\caption{{Cycle cost function in Example~\ref{ex:NotEquidistantBase}}}
		\label{fig:cycle cost example}
	\end{figure}
	To determine the optimal upgrade policy, note that $\bar{N}=(C(H)+v(0))/(c_0-v(0)) = 1.525$ and hence it is never optimal to upgrade more than once. Since $C'(t)=0.15$ for all $t\geq 5$ and $C'(t) > 0.15$ for all $t \in (4.9,5)$, it is never optimal to upgrade at a time in the interval $(4.9,5)$ by~\eqref{eq:NecessaryConditionC}. Moreover, since the failure rate equals zero, there is no advantage to upgrading before $t=4.9$. In view of Proposition~\ref{prop:OptimalUpgradePolicyBase}, the optimal policy is therefore either to never upgrade, or to upgrade once at time $T_1=4.9$ or at time $T_1=5$, with corresponding total costs $C(10)=0.765$, $c_0+C(4.9)+C(5.1)=0.63$ and $c_0+2C(5)=0.78$, respectively. {By comparing these total costs, it is clear that it is optimal to upgrade at time $T_1=4.9$ years, and the remaining time between this upgrade and the end of the lifetime is $5.1$ years. This optimal upgrade policy is illustrated in Figure~\ref{fig:opt policy example}}.
\end{example}

\vspace{-0.5cm}

\begin{figure}[htb]
	\centering
	\includegraphics[scale=0.37]{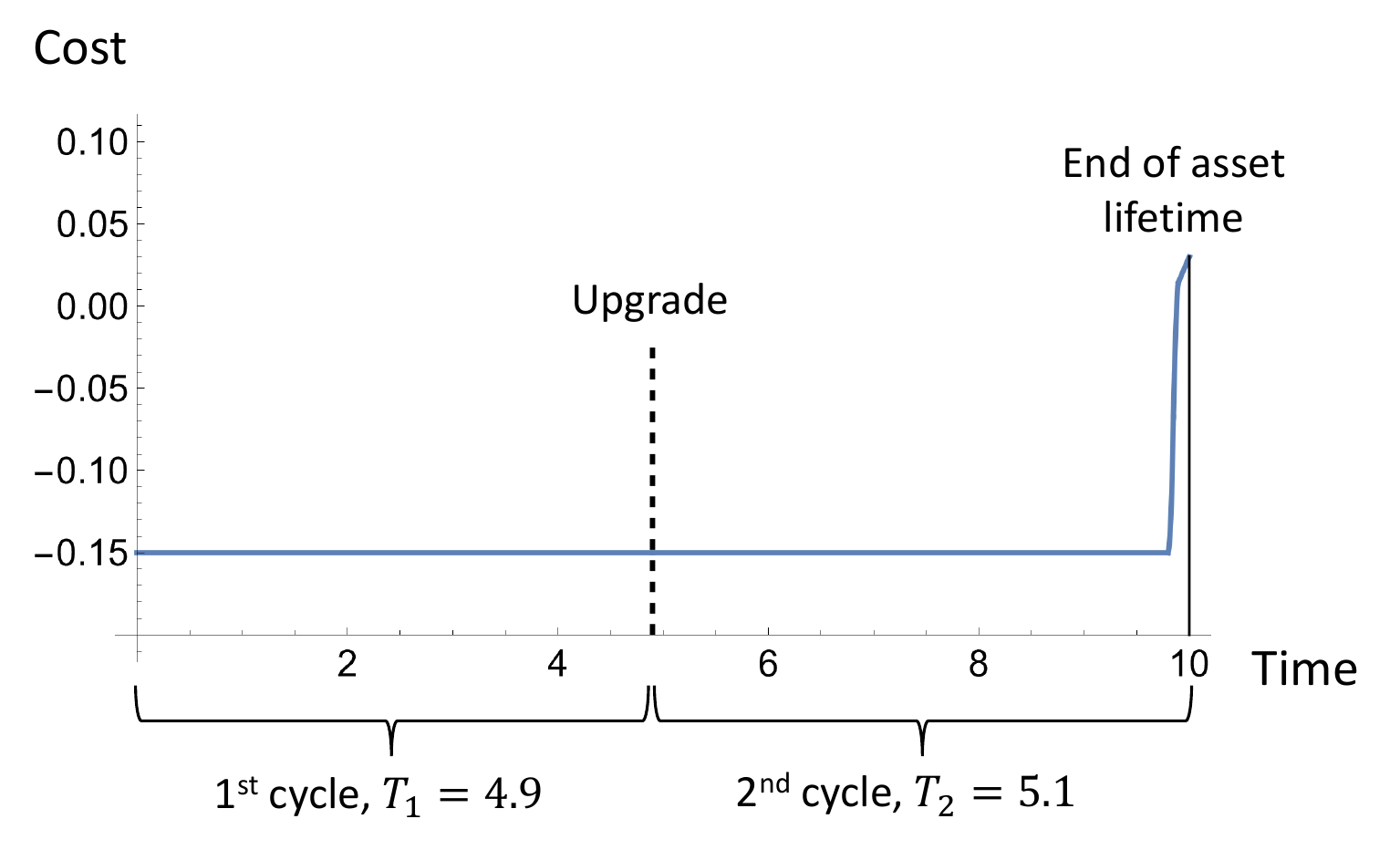}
	\caption{{Optimal upgrade policy in Example~\ref{ex:NotEquidistantBase}}}
	\label{fig:opt policy example}
\end{figure}

\vspace{-0.25cm}

\subsection{Generalized cycle costs}\label{sec:GeneralCycleCosts}
In previous sections, we established the structure of the optimal upgrade policy when the cycle cost function (which must be non-decreasing) is convex, concave or S-shaped. The question is what if the cycle cost function {does not fall in this class of functions}. This can happen in practice, and we address this generalized case in this section. To be specific, a necessary condition for optimality is given by~\eqref{eq:NecessaryConditionC}, and this condition (which assumes the differentiability of the cycle cost $C(t)$ in $t \in [0,H]$) can be exploited to generalize the results of the previous sections. More specifically, one can divide the lifetime of the system $[0,H]$ into intervals for which the cycle costs are convex and concave. That is, suppose that $[0,H]=\mathcal{H}_1 {\cup} \ldots {\cup} \mathcal{H}_k$ for some $k \in \mathbb{N}$ where $k$ is finite, and where the cycle costs are convex on intervals $\mathcal{H}_l$ if $l$ is odd, and concave on intervals $\mathcal{H}_l$ if $l$ is even, and $\mathcal{H}_i \cap \mathcal{H}_j = \emptyset$ if $i \neq j$. In Figure~\ref{fig:general_cycle_cost}, we illustrate such a partition of a generalized cycle cost function. Note that these intervals can be constructed even in the extreme case when the cycle cost is convex by choosing $k=1$ with $\mathcal{H}_1=[0,H]$, and when the cycle cost is concave by choosing  $k=2$ with $\mathcal{H}_1=\{0\}$, $\mathcal{H}_2=(0,H]$. 

\begin{figure}[h!]
	\centering
	\includegraphics[scale=0.5]{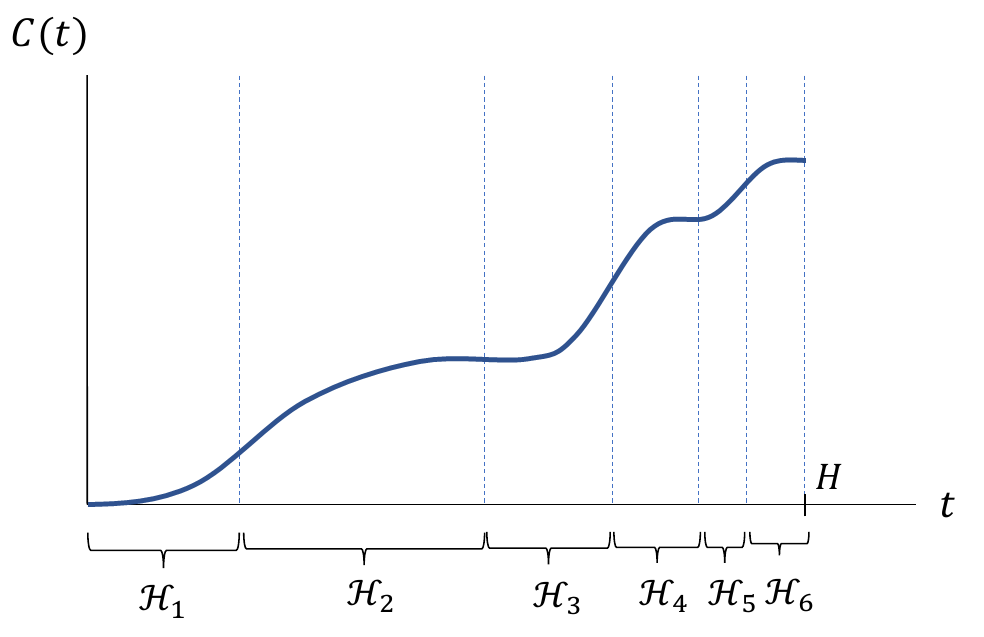}
	\caption{{Illustration of a generalized cycle cost function}}
	\label{fig:general_cycle_cost}
\end{figure}

Our next result requires the technical condition that the proposed partition of the cycle cost function can be done in finite $k$. We point out that $k$ is typically finite for instances we observe in practice, i.e.{,} the cycle costs do not behave erratically by changing from convex to concave infinitely often on the finite interval $[0,H]$. This is an intuitive observation: recall that we showed the cycle cost function $C(\cdot)$ is an aggregation of a convex non-decreasing function, and a non-decreasing function $-v(T), t\geq0$. Thus, it can be argued that the cycle cost do not behave erratically for practically reasonable forms of salvage value function $v(\cdot)$, leading to relatively small $k$ values. Under the proposed partition of the cycle cost function, we next provide a characterization of the optimal upgrade policy.
\begin{proposition}
	\label{prop: general_cycle_cost}
	Suppose that the cycle cost function can be partitioned as proposed in $k$ intervals with $k<\infty$. Then there is always an optimal upgrade policy that satisfies the following properties:
	\begin{itemize}
		% 	\item[(i)] For every $l \leq k$ that is even, there is at most one $i \leq N^*$ for which $T_i^* \in \mathcal{H}_l$;
		\item[(i)] For every even $l$, there is at most one $i \leq N^*$ for which $T_i^* \in \mathcal{H}_l$;
		\item[(ii)] If $T_i^*, T_j^* \in \mathcal{H}_l$ for some odd $l$, then $T_i^*=T_j^*$.
	\end{itemize}
\end{proposition}

Note that {Proposition~\ref{prop: general_cycle_cost}} is a generalization of the results in the previous sections. In case that the cycle costs are convex on $[0,H]$, then Property~(ii) implies equidistant upgrade times. In case that the cycle costs are concave on $[0,H]$, Property~(i) implies that there can only be a single $T_1^* \in [0,H]$, and hence it is optimal to not upgrade at all. Finally, if the cycle costs have an S-shape, then Properties~(i) and~(ii) imply Proposition~\ref{prop:OptimalUpgradePolicyBase}.

We note that the calculation of the optimal upgrade policy may become cumbersome when the cycle costs behaves erratically, changing from convex to concave very often. As long as $k$ is {not} extremely large, the optimal inter-upgrade times can be derived relatively efficiently for any given number of upgrades, leading to a solution approach to obtain the optimal upgrade policy for a generalized cycle cost function as further discussed in  Appendix~\ref{sec:SolutionApproachB1}.

\section{Solution approach}\label{sec:SolutionApproach}
The results of the previous section can be used {to determine} the optimal policy. Note that we identified two extreme cases with respect to the penalty $c_d$, namely the base case where $c_d=0$ and an upgrade does not affect the asset's operation, or the case where $c_d=\infty$ and upgrades occur only during overhauls. We discuss the solution approach for both extreme cases first, after which we explain how one can combine the approaches to find the optimal policy for any value of $c_d$. 

\subsection{Solution approach for the base case}
\label{sec: solappbasecase}
As argued in Section~\ref{sec:GeneralCycleCosts}, the optimal inter-upgrade times can be derived efficiently for any finite values of $N$ as long as the cycle cost functions are relatively well-behaved, i.e.{,} do not change between convex and concave too often up to lifetime~$H$. In particular, we considered three special cases in Section~\ref{sec:Analysis}. If the cycle costs are concave, then the optimal policy is to never upgrade. If the cycle costs are convex, the optimal number of upgrades is the first integer at which $\Delta C^N(H) = C^{N+1}(H)-C^N(H)$ is non-negative, denoted by $N^*$. The optimal policy is then to upgrade $N^*$~times after every $H/(N^*+1)$ time units. This yields Algorithm~\ref{alg:FindOptimalAmongEquidistantConvex} as given in Appendix~\ref{sec:SolutionApproachB1}. Implicitly, this solution approach is used for Example~\ref{ex:MainExample} in Section~\ref{sec:ConvexCycleCostsBase}.

If the cycle costs are S-shaped with point of inflection~$x \in (0,H)$, we would need to check more possibilities as there may be an optimal policy where the time between the last upgrade and the asset's lifetime differs from the other inter-upgrade times. That is, we need to check the total costs under the policy of never upgrading, under the class of policies with equidistant upgrade times, and finally under the class of policies where the remaining lifetime differs from the other inter-upgrade times. No upgrading leads to total costs~$C(H)$. Next, Proposition~\ref{prop:OptimalUpgradePolicyBase} shows that if it is optimal to upgrade at least once, then there exists a policy with optimal inter-upgrade times $T_1=\ldots=T_N \leq x$. In the class of equidistant upgrade times, it therefore suffices to look at $N \geq H/x - 1$. In that case, $C(t)$ is convex in $t = H/(N+1)$, and hence Lemma~\ref{lem:ConvexityOfCN} follows through, i.e.{,}~$\mathcal{C}^N(H)$ is convex in $N \geq H/x - 1$. This implies that we can adopt a similar approach as for convex cycle cost function. That is, we determine the first integer $N\geq H/x - 1$  for which $\Delta C^N(H) = C^{N+1}(H)-C^N(H)$ is non-negative. Finally, for the class of policies with a different remaining lifetime, note that due to the S-shape of the cycle costs and Proposition~\ref{prop:OptimalUpgradePolicyBase}, it holds that an optimal policy satisfies $T_{N+1} \in (x,H)$ and $T_i = (H-T_{N+1})/N$ for all $i=1,\ldots,N$ if $N\geq 1$. Define
\begin{align}\label{eq:TotalCostsBaseSShape}
	\tilde{\mathcal{C}}^N(H,t) :=
	N c_0 + N C\left(\frac{H-t}{N}\right) + C(t), \hspace{1cm} N\geq 1,
\end{align}
the total costs where $N$ upgrades are executed at equidistant time intervals of length $(H-t)/N$, and the remaining lifetime after the last upgrade is given by $t$. By deriving $t\geq \max\{H/(N+1),x\}$ that minimizes $\tilde{\mathcal{C}}^N(H,t)$, we inherently determine the optimal policy that minimizes the total costs for this class of policies with $N$ upgrades. Checking for a finite number of possible upgrades (see Lemma~\ref{lem:FiniteNumberOfReplacements}), leads to all potential optimal upgrade policies with a different remaining lifetime. We formalize this solution approach in Algorithm~\ref{alg:FindOptimalBaseSShaped}, which is given in Appendix~\ref{sec:SolutionApproachB1}.

\begin{example}
	Recall Example~\ref{ex:NotEquidistantBase}. The derived solution could also be found by executing Algorithm~\ref{alg:FindOptimalBaseSShaped} in Appendix~\ref{sec:SolutionApproachB1}.
	\begin{enumerate}
		\item 	The first step is to check to class of equidistant inter-upgrade times. Note that $\lceil H/x -1 \rceil = 2$ and $\mathcal{C}^{3}(H) = 1.65 > 1.05 = \mathcal{C}^{2}(H)$, this part terminates after a single step and returns $N^*=2$, $T^*=5$ and $C=1.05$.
		\item The next step shows that $C(10) = 0.765 < 1.05$, and hence we update our values to $N^*=0$, $T^*=0$ and $C=0.765$. 
		\item Note that $\lfloor\bar{N}\rfloor =  \lfloor (C(H)+v(0))/(c_0-v(0)) \rfloor = 1$, and hence the for loop consists only of a single step. To determine the $t^* \in [5,5.1]$ that minimizes $\tilde{\mathcal{C}}^1(H,t)$, we observe
		\begin{align*}
			\frac{\partial \tilde{\mathcal{C}}^1(H,t)}{\partial t}=C'(H-t)+C'(t) = \left\{ \begin{array}{ll}
				60 t-299.7 & \textrm{if } 5 \leq t \leq 5.05, \\
				-60t+306.3 & \textrm{if } 5.05 \leq t < 5.1, \\
				0.15 & \textrm{if }  t = 5.1, 
			\end{array}\right.
		\end{align*}
		This function is minimized at $t^*=5.1$, for which $\tilde{\mathcal{C}}^1(H,t^*)=0.63 < 0.765 = C$. Therefore, we update our values to $C = 0.63$, $N^*=1$, and $T^*=4.9$. 
		\item Finally, the algorithm returns the optimal policy $N^*=1$, $T_1^*=4.9$, $T_2^*=5.1$ and corresponding minimum costs $\mathcal{C}^*(H)=0.63$. 
	\end{enumerate}
	
\end{example}

When the cycle costs are not convex, concave or S-shaped, a more involved analysis needs to be performed. As indicated in Section~\ref{sec:GeneralCycleCosts}, this can still be done as long as the number of times that the cycle costs changes from convex to concave or vice versa is not too large, which is the case for most practical instances. That is, we need to find an optimal policy under the class of policies that satisfy the two properties indicated in Proposition~\ref{prop: general_cycle_cost}. By exploiting these two properties, a solution approach can be designed in a similar way as we did for the S-shaped function, see Appendix~\ref{sec:SolutionApproachB1}.
%However, we point that these minimization problems become more involved for larger~$k$, and it is beyond the scope of this paper to move into more detail on this topic.

\subsection{Solution approach when only upgrading during overhauls}\label{sec:SolutionAppOnlyOverhauls}
The extreme case $c_d=\infty$ where upgrades are only executed during overhauls reduces the number of possible upgrade plans significantly. During the asset's lifetime, one needs to decide only at the $m$ overhauls whether to upgrade or not, where $m$ is finite and relatively small in practice. This can be solved via dynamic programming. In essence, we condition on the first occurrence of executing an upgrade at an overhaul, if any. Suppose that this occurs at some time $T \in (0,H)$, then we split the horizon~$H$ in two parts $[0,T]$ and $(T,H]$. The total costs during the first part is given by $c_0+C(T)$, while the total costs poses an identical problem except that the time horizon has changed to a smaller value $H-T$. Since $m$ is relatively small, we can efficiently work backwards in time to determine the optimal policy. Details of this approach are given in Algorithm~\ref{alg:FindOptimalNoIntermediate} of Appendix~\ref{sec:SolutionApproachB2}.

\begin{example}
	Reconsider the example as described in Example~\ref{ex:MainExample}. For the additional parameters, suppose that $c_d=\infty$ and $M_i=5$ for $i=1,\ldots,6$ in both settings, i.e.{,} there is an overhaul every 5~years. In setting~A, recall that the optimal policy in the base case was to upgrade after 15 years. Since this coincides with an overhaul, this policy remains the optimal choice and consequently, the corresponding minimal total costs of $27.308$ are not affected. For setting~B, the optimal policy does change with respect to the base model: instead of upgrading every six years, we upgrade every five years together with a planned overhaul. This leads to total costs of $38.732$. 
\end{example}

\subsection{On the optimal solution for the general case}
\label{subsec: solngeneral}
To find the optimal policy for the general case, a similar dynamic programming approach can be taken. To determine the optimal policy, we condition on the first occurrence we execute an upgrade at an overhaul (if any). This cuts the lifetime in two pieces that can be optimized separately. The expected total costs in the first piece can be determined using a strategy similar to the base model, since we conditioned that no upgrade is executed during an overhaul in this interval. The second piece of the interval is an identical problem for the general model, but with a shorter horizon length. Using dynamic programming, we can then efficiently determine the optimal policy.

More specifically, suppose that the first time an upgrade is executed at an overhaul is at time $T = \sum_{i=1}^j M_i$ for some $j \leq m$. In the first interval $[0,T)$, no upgrade is executed at an overhaul moment. This corresponds to the base case, with a slight modification that the price of executing an upgrade comes with an additional penalty $c_d>0$. That is, for a given policy~$\Pi$, let $\hat{\mathcal{C}}^\Pi(T)$ denote the total costs in the base case with price $c_0+c_d$ for an upgrade, horizon length~$T$, number of upgrades $\hat{N}$ with corresponding inter-upgrade times $T_1,\ldots, T_{\hat{N}}$ and $T_{\hat{N}+1}=T-\sum_{i=1}^{\hat{N}} T_i$. Equivalently to~\eqref{eq:TotalCostsBaseCase}, the total costs are given by
\begin{align}\label{eq:TotalCostsBaseWithPenaltyCosts}
	\hat{\mathcal{C}}^\Pi(T) = \hat{N} \cdot (c_0+c_d) + \sum_{i=1}^{\hat{N}+1} C(T_i).
\end{align}
%In particular, if the shape of the cycle cost function is convex, concave or S-shaped, we derived in Section~\ref{sec:Analysis} that the upgrade strategy has a relatively simple form structurally. For other shapes of the cycle cost function, a more involved analysis needs to be executed, as indicated in Section~\ref{sec:GeneralCycleCosts}.

The price of the upgrade at time $T$ itself is given by $c_0$. Finally, we need to consider the total costs for the second interval. However, this poses an identical problem with a shorter horizon. Since $m$ is relatively small, we can therefore use dynamic program efficiently to determine the optimal policy, see Algorithm~\ref{alg:FindOptimalGeneral} in Appendix~\ref{sec:SolutionApproachB1}.

\begin{example}
	Reconsider the example as described in Example~\ref{ex:MainExample}, together with $c_d=1.5$ and $M_i=10$ for $i=1,2,3$ in both settings. By executing Algorithm~\ref{alg:FindOptimalGeneral} in Appendix~\ref{app:SolutionApproach}, we determine the optimal policy which is given as follows. In case~A, it is optimal to execute an upgrade twice after every 10~years, i.e.{,} together with the planned overhauls. The associated costs are $28.027$. In setting~B, it turns out that it is optimal to upgrade once after 10~years during an overhaul. In the remaining 20 years, it is optimal to upgrade two more times at times that do not coincide with overhauls. That is, the optimal policy prescribes to upgrade at times $\{10,16\frac{2}{3}, 23\frac{1}{3}\}$ years with associated total costs of $41.794$. 
\end{example}
% \begin{remark}
	% {Note that our model assumes a fixed lifetime for the asset. The lifetime of the asset can also be considered as random, for example, when the asset receives a certain amount of lifetime extension somewhere in the planning horizon but it is not known upfront whether the lifetime extension will be granted or not. Our solution approach based on dynamic programming can easily be extended to such a setting.}
	% \end{remark}

% It may be possible in some cases the asset may receive a lifetime extension at a point close to the end of its lifetime. For example, it can be possible that, at some fixed time in the planning horizon, a higher-level decision maker reveals whether the lifespan of the system is extended by a fixed amount or it is kept the same. Our model can easily be extended to such a setting.

\section{Effects of the input parameter on the optimal total costs}\label{sec:Sensitivity}
We observed that in the base case, the behavior of the cycle cost function determines the structure of the optimal upgrade policy. When a penalty is incurred for upgrading while there is no overhaul, the optimal upgrade policy can be determined using dynamic programming (that uses the structural results from the base case). The final outcome depends heavily on the values and functions of the different input parameters. In this section, we take a closer look at this notion both analytically and numerically. We note that computational times of our solution approach is low in all instances in the examples below. In particular, these times are not influenced by the value of $c_d$. The proofs of the analytical results can be found in Appendix~\ref{app:sensitivityProofs}.

\subsection{The effect of the penalty $c_d$ for upgrades outside overhaul moments}
In previous sections, we already observed that there is a strong dependence on the penalty~$c_d$. Whenever this penalty is very small, the optimal policy is the same as in the base case. {To be specific,} if the base case prescribes to upgrade only at moments that happen to not coincide with the overhaul moments, then this {also} holds for all sufficiently small penalty values of $c_d$. If there is a relatively high penalty $c_d$, it would be very costly to execute upgrades outside overhaul moments, and hence it would not be optimal to do so. In fact, whenever it is optimal to only execute upgrades during overhauls for a given penalty $\bar{c}_d$, then the same strategy is optimal for every case with $c_d \geq \bar{c}_d$. This is rather intuitive: higher penalties strictly increase the total costs for any policy that executes upgrades outside the overhauls, while policies that only upgrade during overhauls yield the same total costs. 
{Proposition~\ref{prop:MainCdResult} generalizes this notion, and shows that,} as the penalty $c_d$ increases, the optimal policy can only change to a policy where the number of upgrades that are not jointly executed with overhauls is smaller.

\begin{proposition}\label{prop:MainCdResult}
	As the penalty $c_d$ increases, the optimal policy can only change to one where the number of upgrades outside overhauls is {reduced}.
\end{proposition}

In particular, if a policy $\Pi^*$ prescribes to only upgrade during overhauls, then $S^{\Pi^*}=0$. If $\Pi^*$ is the optimal policy for some penalty value $\bar{c}_d$, then Proposition~\ref{prop:MainCdResult} implies that the optimal policy also only upgrades during overhauls for all $c_d \geq \bar{c}_d$. 

\begin{corollary}\label{cor:StayWithOverhaulMomentsCd}
	Suppose that for a penalty $\bar{c}_d$, an optimal policy $\Pi^*$ prescribes to upgrade only at planned overhauls. For any setting with $c_d \geq  \bar{c}_d$ and where the other parameter settings are the same, $\Pi^*$ is also an optimal policy.
\end{corollary}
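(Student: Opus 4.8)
The plan is to derive this directly from Proposition~\ref{prop:MainCdResult} together with the elementary observation that the total cost of any overhaul-only policy is independent of $c_d$. Fix the penalty $\bar{c}_d$ and let $\Pi^*$ be an optimal policy for that penalty which upgrades only at planned overhauls; in particular $S^{\Pi^*}=0$. Now take any $c_d \geq \bar{c}_d$. The first step is to note that, since $\Pi^*$ executes no upgrades outside overhauls, the cost expression~\eqref{eq:TotalCostsGeneral} evaluated at $\Pi^*$ reads $\mathcal{K}^{\Pi^*}(H) = 0\cdot c_d + N^* c_0 + \sum_{i=1}^{N^*+1} C(T_i^*)$, which does not depend on $c_d$. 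Hence $\Pi^*$ has the same total cost in the setting with penalty $c_d$ as in the setting with penalty $\bar{c}_d$.

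The second step is to invoke Proposition~\ref{prop:MainCdResult}: as the penalty increases from $\bar{c}_d$ to $c_d$, the optimal policy can only move to one with a (weakly) smaller number $S$ of upgrades executed outside overhauls. Since $S^{\Pi^*}=0$ already and $S \geq 0$ always, this forces $S=0$ for some optimal policy at penalty $c_d$ as well — that is, there is an optimal policy for penalty $c_d$ that also upgrades only during overhauls. Denote it $\tilde{\Pi}$. Its total cost is likewise independent of $c_d$, so it equals the cost of $\tilde{\Pi}$ under penalty $\bar{c}_d$. But at penalty $\bar{c}_d$ the policy $\Pi^*$ was optimal, so the cost of $\tilde{\Pi}$ under $\bar{c}_d$ is at least $\mathcal{K}^{\Pi^*}(H)$ (computed under $\bar{c}_d$), which by the first step equals $\mathcal{K}^{\Pi^*}(H)$ computed under $c_d$. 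Combining, $\Pi^*$ attains the minimal total cost under penalty $c_d$, i.e.\ $\Pi^*$ is optimal there.

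I do not anticipate a genuine obstacle here: the corollary is a short packaging of Proposition~\ref{prop:MainCdResult} and a monotonicity-free observation. The only point requiring mild care is the direction of the inequalities in the second step — one must be sure that ``$S$ can only decrease'' together with ``$S^{\Pi^*}=0$ is already minimal'' actually yields an overhaul-only optimal policy at the larger penalty, rather than merely bounding $S$ from above by something that could be vacuous. Since $S$ is a nonnegative integer, the bound $S\le 0$ is not vacuous and pins down $S=0$, which closes the argument. A one-line alternative, avoiding Proposition~\ref{prop:MainCdResult} entirely, is also worth mentioning: for any policy $\Pi$ one has $\mathcal{K}^{\Pi}(H;c_d) = \mathcal{K}^{\Pi}(H;\bar c_d) + (c_d-\bar c_d)S^{\Pi} \ge \mathcal{K}^{\Pi}(H;\bar c_d) \ge \mathcal{K}^{\Pi^*}(H;\bar c_d) = \mathcal{K}^{\Pi^*}(H;c_d)$, where the last equality uses $S^{\Pi^*}=0$; this shows $\Pi^*$ is optimal under $c_d$ in one chain of inequalities, and I would likely present this version for brevity while remarking that it also re-proves Proposition~\ref{prop:MainCdResult}'s corollary-level content.
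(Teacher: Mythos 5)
Your proposal is correct and follows essentially the same route as the paper, which likewise derives the corollary from Proposition~\ref{prop:MainCdResult} combined with the observation that any overhaul-only policy has $S^{\Pi}=0$ and hence a total cost independent of $c_d$. Your one-line chain $\mathcal{K}^{\Pi}(H;c_d) = \mathcal{K}^{\Pi}(H;\bar c_d) + (c_d-\bar c_d)S^{\Pi} \ge \mathcal{K}^{\Pi^*}(H;\bar c_d) = \mathcal{K}^{\Pi^*}(H;c_d)$ is a tidy, self-contained restatement of the same argument.
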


We would like to point out that Corollary~\ref{cor:StayWithOverhaulMomentsCd} implies scenarios where the optimal upgrade solution is insensitive to the value of~$c_d$. Whenever the base case leads to an upgrade plan where the timing of the upgrades coincide with the overhauls, Corollary~\ref{cor:StayWithOverhaulMomentsCd} implies that this is also the optimal strategy for all $c_d>0$. In particular, if it is optimal to never upgrade in the base case, then this is also the optimal strategy for positive values of penalty~$c_d$. 

\begin{example}
	\label{exm:exm7}
	To illustrate Proposition~\ref{prop:MainCdResult}, we reconsider setting~B of Example~\ref{ex:MainExample}. That is, suppose that the horizon length is given by $H=30$, the price is $c_0=4$, and we have cycle costs
	\begin{align*}
		C(t) = \frac{t}{3} + \frac{3}{16} \left(\frac{t}{3}\right)^3 + \frac{1}{10}
		t^{1.1}.
	\end{align*}

	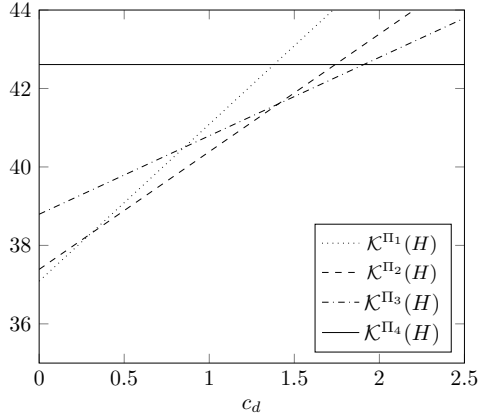
\begin{figure}[htb!]
		\centering
		\caption{Total costs $\mathcal{K}^{\Pi}(H)$ under policies $\Pi_1=\{6,12,18,24\}$, $\Pi_2=\{7.5,15,22.5\}$, $\Pi_3=\{10,16\frac{2}{3}, 23\frac{1}{3}\}$ and $\Pi_4=\{10,20\}$.}
		\label{fig:EffectOfCd}
		\begin{tikzpicture}[scale=0.825]
			\begin{axis}[domain=0:30, xmin=0, xmax=2.5,ymin=35,ymax=44,samples=300,legend style={at={(1.1,1)}, anchor=north west},legend pos=south east,no marks,legend entries={\small $\mathcal{K}^{\Pi_1}(H)$,\small $\mathcal{K}^{\Pi_2}(H)$,$\mathcal{K}^{\Pi_3}(H)$,$\mathcal{K}^{\Pi_4}(H)$},xlabel=$c_d$]

				\addplot[dotted] {37.0887+4*\x};
				\addplot[dashed] {37.3884+3*\x};
				\addplot[dashdotted] {38.794+2*\x};
				\addplot[] {42.6101};
				
			\end{axis}
			
		\end{tikzpicture}
	\end{figure}
	
	Moreover, two overhauls are planned every 10 years, i.e.{,} $M_i=10$, $i=1,2,3$. We consider how the optimal upgrade policy is influenced by $c_d$. In view of Figure~\ref{fig:EffectOfCd}, the optimal {upgrade policy} is:
	\begin{itemize}
		\item If $c_d \in [0,0.29973)$, upgrade every 6 years at times $\{6,12,18,24\}$ with $\mathcal{K}^*(H) = 16+5 C(6)+4 c_d$;
		\item If $c_d \in [0.29973,1.40559)$, upgrade at times $\{7.5,15,22.5\}$ with $\mathcal{K}^*(H) = 12+4C(7.5)+3 c_d$;
		\item If $c_d \in [1.40559,1.90805)$, upgrade during one overhaul moment and two times after $6\frac{2}{3}$ years, i.e.{,} at times $\{10,16\frac{2}{3}, 23\frac{1}{3}\}$ or $\{6\frac{2}{3}, 13\frac{1}{3}, 20\}$ with $\mathcal{K}^*(H) = 12+C(10)+3C(6\frac{2}{3})+2 c_d$;
		\item If $c_d \geq 1.90805$, upgrade twice during the overhauls at times $\{10,20\}$ with $\mathcal{K}^*(H) = 8+3C(10)$.
	\end{itemize} 
\end{example}
{In Example~\ref{exm:exm7}, we observe that, as the penalty of upgrading outside an overhaul moment ($c_d$) is increasing, the number of upgrades outside an overhaul moment is non-increasing (as already shown in Proposition~\ref{prop:MainCdResult}), and the number of upgrades at an overhaul is non-decreasing. Also, we see that the total number of upgrades is a non-increasing function of $c_d$. This is intuitive as one can expect that, given everything else is equal, the higher a cost associated with an upgrade, the lower the number of upgrades. However, interestingly, this does not hold in general as illustrated next.
	\begin{example}
		\label{exm:exm8}
		Suppose that everything is the same as in Example~\ref{exm:exm7}, except that the cycle cost function is now from the setting A of Example~\ref{ex:MainExample}, i.e., 
		\begin{align*}
			C(t) = \frac{t}{3} + \frac{3}{16} \left(\frac{t}{3}\right)^2 + \frac{1}{10}
			t^{1.1}.
		\end{align*}
		The optimal upgrade policy is then as follows:
		\begin{itemize}
			\item If $c_d \in [0,0.71872)$, upgrade only once at time $\{15\}$ with $\mathcal{K}^*(H) = 4+2 C(15)+ c_d$;
			\item If $c_d \geq 0.71872$, upgrade at times $\{10,20\}$ with $\mathcal{K}^*(H) = 8 + 3C(10)$;
		\end{itemize}
	\end{example}
	Example~\ref{exm:exm8} shows that, as $c_d$ increases, the scenarios where upgrading occurs during overhauls may become relatively more attractive, leading to more frequent upgrades. 
}

\subsection{The effect of the number of overhaul moments}
As the number of overhauls increases, it generates more possibilities to execute upgrades without having to pay an additional penalty $c_d$. This does not necessary imply that the number of upgrades (during overhauls) increases together with the number of overhauls. The lack of this monotonicity property can be easily explained as follows. There is an optimal upgrade policy in the base case, i.e., the optimal policy to minimize costs if no penalty $c_d$ is incurred. This is attained in the general setting if the overhauls coincide exactly with the upgrade moments in this policy. Therefore, one cannot expect to do better than this, even if more overhauls are planned. To illustrate this notion, we consider the following example.

\begin{example}
	Reconsider setting~B of Example~\ref{ex:MainExample} with $H=30$, $c_0=4$ and $c_d=5$. In the base case when no penalty is incurred, the optimal policy is to upgrade $N=4$ times (every 6 years) with minimal costs of 37.0887. This a lower bound on the total costs for the general setting. If the overhauls are planned equidistantly, this implicitly also implies that this optimal solution is obtained whenever the number of overhaul moments is given by $m=5k-1$ for some $k \in \mathbb{N}$. Indeed, in Table~\ref{tab:EffectNumberOverhaulMoments} we observe that the total costs are higher when e.g.{,}~$m=5$ than when $m=4$. {Moreover, note that as the number of overhauls grows from $m=1$ to $m=2$, the total number of upgrades decreases.}
	
	\begin{table}[h!]
		\centering
		\caption{Optimal upgrade policy and minimal costs in case of overhaul moments $M_i=H/(m+1)$, $i=1,\ldots,m$. The number in bold correspond to moments that coincide with overhauls.}
		\label{tab:EffectNumberOverhaulMoments}
		\begin{tabular}{ | l | l | l |}
			\hline
			$m$ & Optimal upgrade policy $\Pi^*$ & $K^*(H)$ \\ 
			\hline
			0 & \{7.5,15,22.5\} & 52.3884  \\
			1 & \{7.5,\textbf{15},22.5\} & 47.3884 \\
			2 & \{\textbf{10},\textbf{20}\} & 42.6101  \\
			3 & \{\textbf{7.5},\textbf{15},\textbf{22.5}\} & 37.3884 \\
			4 & \{\textbf{6},\textbf{12},\textbf{18},\textbf{24}\} & 37.0887 \\
			5 & \{\textbf{5},\textbf{10},\textbf{15},\textbf{20},\textbf{25}\} & 38.7322 \\
			\hline
		\end{tabular}
		
	\end{table}
\end{example}

\subsection{The effect of price $c_0$}
Naturally, as the price $c_0$ increases, we can expect fewer upgrades and indeed, this intuitive notion turns out to be correct. Note that Assumption~\ref{ass:UpgradeCosts} states that we only consider values of $c_0$ such that $c_0>v(0)$.

\begin{lemma}
	The optimal number of upgrades $N^*$ is non-increasing in $c_0$.
	\label{lem:NDecreasingInc0}
\end{lemma}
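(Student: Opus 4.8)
**Proof proposal for Lemma (monotonicity of $N^*$ in $c_0$).**

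The plan is to argue by a standard exchange/comparison argument on the total cost function $\mathcal{K}^*(H)$ viewed as a function of $c_0$. Fix all other parameters and write $\mathcal{K}^*_{c_0}(H)$ for the minimal total costs and $N^*(c_0)$ for an optimal number of upgrades (choosing, say, the smallest optimal $N$ to make things well-defined). Suppose toward a contradiction that there exist $c_0 < c_0'$ with $N^*(c_0) < N^*(c_0')$; write $N = N^*(c_0)$ and $N' = N^*(c_0')$ and note $N < N'$. The key observation is that, from the cost expression \eqref{eq:TotalCostsGeneral} (or \eqref{eq:TotalCostsBaseCase} in the base case), the dependence of $\mathcal{K}^\Pi(H)$ on $c_0$ is \emph{exactly linear with slope $N$} (the number of upgrades in $\Pi$), since $\mathcal{K}^\Pi(H) = S^\Pi c_d + N c_0 + \sum_{i=1}^{N+1} C(T_i)$. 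So if $\Pi$ is optimal for $c_0$ with $N$ upgrades and $\Pi'$ is optimal for $c_0'$ with $N'$ upgrades, then optimality at the respective prices gives
\begin{align*}
	S^{\Pi} c_d + N c_0 + \sum_i C(T_i^{\Pi}) &\le S^{\Pi'} c_d + N' c_0 + \sum_i C(T_i^{\Pi'}), \\
	S^{\Pi'} c_d + N' c_0' + \sum_i C(T_i^{\Pi'}) &\le S^{\Pi} c_d + N c_0' + \sum_i C(T_i^{\Pi}).
\end{align*}
Adding these two inequalities, all the $S c_d$ and $\sum C$ terms cancel and one is left with $N c_0 + N' c_0' \le N' c_0 + N c_0'$, i.e. $(N - N')(c_0 - c_0') \le 0$, i.e. $(N' - N)(c_0' - c_0) \le 0$. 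Since $c_0' > c_0$ this forces $N' \le N$, contradicting $N < N'$. Hence $N^*$ is non-increasing in $c_0$.

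One has to be slightly careful about two points. First, the argument as written shows that \emph{every} optimal $N$ at $c_0'$ is $\le$ \emph{every} optimal $N$ at $c_0$, which is even stronger than needed and handles the non-uniqueness issue cleanly: if $N^*(c_0)$ and $N^*(c_0')$ are chosen as any optimal values, the pair of inequalities above still holds verbatim, so the conclusion $N^*(c_0') \le N^*(c_0)$ is immediate. Second, one should note that an optimal policy exists at each price: this follows because by Lemma~\ref{lem:FiniteNumberOfReplacements} the number of candidate values of $N$ is finite (bounded by $\lfloor \bar{N} \rfloor$, and $\bar N$ does not even depend on $c_0$ in a way that breaks the argument — if $c_0$ grows, $\bar N$ only shrinks), and for each fixed $N$ the inner minimization of $\sum C(T_i)$ over the compact simplex $\{T_i \ge 0, \sum T_i \le H\}$ attains its minimum by continuity of $C$. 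So the minima defining $\mathcal{K}^*$ are genuine minima and the exchange argument applies.

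The main obstacle — really the only subtlety — is making sure the cancellation in the summed inequalities is airtight: the point is that the $S^\Pi c_d + \sum_i C(T_i^\Pi)$ part of the cost is a fixed real number depending on $\Pi$ but \emph{not} on $c_0$, so when we compare the same policy $\Pi$ at two prices only the $N c_0$ term moves. Everything else is the elementary rearrangement inequality for two points on two lines with different slopes. I would present it essentially as above: set up the two optimality inequalities, add, cancel, and read off the sign condition. No appeal to the structural results of Section~\ref{sec:Analysis} is needed — this monotonicity is a general feature of the linear-in-$c_0$ structure of the cost — though one may remark that in the base case with convex cycle costs it also follows transparently from the convexity of $\mathcal{C}^N(H)$ in $N$ (Lemma~\ref{lem:ConvexityOfCN}) together with the fact that increasing $c_0$ shifts the discrete derivative $\Delta \mathcal{C}^N(H)$ uniformly upward, moving the first sign change to a smaller index.
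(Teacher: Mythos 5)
Your proof is correct and rests on exactly the same observation as the paper's: for any fixed policy the total cost \eqref{eq:TotalCostsGeneral} is linear in $c_0$ with slope equal to its number of upgrades, so as $c_0$ grows the minimizing $N$ can only move to a smaller value. The paper phrases this by comparing the family of lines $N c_0 + K^N(H)$, where $K^N(H)$ is the best achievable cost among policies with $N$ upgrades; your two-inequality exchange argument is simply a more explicit rendering of that same step, additionally handling the tie-breaking and existence issues that the paper leaves implicit.
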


Next, we provide an example to illustrate this result.

\begin{example}
	Reconsider setting~B of Example~\ref{ex:MainExample} with $H=30$, $c_d=5$ and two overhaul moments after every 10 years. In Table~\ref{tab:EffectOfC0} we display the total costs under the optimal upgrade policy under the condition that the number of upgrades~$N$ is fixed. We point out that if $c_0=0$, then the optimal policy is to upgrade after every 5 years with $N^*=5$, and hence we do not consider larger values of~$N$ in Table~\ref{tab:EffectOfC0}. We conclude that 	
	\begin{align*}
		N^* = \left\{ \begin{array}{ll}
			5 & \textrm{if } c_0 \in [0,0.2926] \\
			%4 & \textrm{if } c_0 \in [0.2926,0.29265] \\
			{2} & {\textrm{if } c_0 \in [0.2926,10.5944]} \\
			%2 & \textrm{if } c_0 \in [0.29265,10.5944] \\
			1 & \textrm{if } c_0 \in [10.5944,135.907] \\
			0 & \textrm{if } c_0 > 135.907,
		\end{array}\right.
	\end{align*}
	where the corresponding optimal policies can be found in Table~\ref{tab:EffectOfC0}. 
	\begin{table}[htb!]
		\centering
		\caption{Value of $\mathcal{K}^{\Pi}(H)$  under optimal upgrade policy~$\Pi$ conditioned on having $N$ upgrades.} 
		\label{tab:EffectOfC0}
		\begin{tabular}{ | l | l | l |}
			\hline
			$N$ & $\Pi$ & $\mathcal{K}^\Pi(H)$ \\ 
			\hline
			0 & \{$\emptyset$\} & 201.715 \\
			1 & \{15\} & 65.8081$ + c_0$ \\
			2 & \{10,20\} & 34.6101$ + 2 c_0$ \\
			% 3 & \{5,10,20\} & 34.3175$ + 3 c_0$ \\
			% 4 & \{5,10,15,20\} & 34.0248$ + 4 c_0$ \\
			5 & \{5,10,15,20,25\} & 33.7322$ + 5 c_0$ \\
			\hline
		\end{tabular}
	\end{table}
	
	Although the optimal number of upgrades changes as $c_0$ increases, we observe that there is a rapid change from $N^*=5$ upgrades to $N^*=2$ in a relatively short interval ($c_0 \in [0.29,0.30]$). 
	%\bl{We point out that the total costs are extremely close for $c_0 \in [0.29,0.30]$. 
		For the values of $c_0$ within this interval, we also note that it matters relatively little which policy (i.e., $N \in \{2,3,4,5 \})$ is chosen, as all lead to similar total costs.

	\end{example}

	\subsection{The effect of the cycle costs}
	A final aspect concerns the effect of the cycle costs, which can be seen as the combined contribution of the salvage value, the missing functionality penalty function and the failure aspect. Already in the base case, we observed the strong effect of the cycle costs on the structural properties of the optimal upgrade policy. In this section, we focus on whether we can say more about the upgrade policy whenever the cycle costs start to increase rapidly. 
	
	Intuitively, we would always upgrade before a point at which the cycle costs increase very strongly. To a certain extent, this can be made rigorous through the following result. Suppose that the cycle costs increase extremely after some time $z \in (0,H)$. If it is already optimal to upgrade multiple times with all inter-upgrade times occurring before time $z$ in an alternate setting where we linearize the cycle costs after time $z$, i.e.{,} a setting where a less rapid increase in cycle costs is assumed after time $z$, then this must also be optimal for the original setting. 
	
	\begin{lemma}\label{lem:SteepIncreaseCycleCostsAfterZ}
		Consider an alternate setting with the same parameter settings, except $\tilde{C}(t) = C(z)+(t-z)C'(z)$ for all $t \geq z$ for some $z \in (0,H)$. If the optimal policy $\tilde{\Pi}^*$ for the alternate setting is to upgrade $\tilde{N}^*$ times with $\tilde{T}_i^* \leq z$ for all $i=1,\ldots,\tilde{N}^*+1$, then $\tilde{\Pi}^*$ is also the optimal upgrade policy for the original setting.
	\end{lemma}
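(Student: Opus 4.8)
The plan is a policy-by-policy comparison of the total costs $\mathcal{K}^{\Pi}(H)$ in the original setting against the total costs, call them $\tilde{\mathcal{K}}^{\Pi}(H)$, in the alternate setting. The two settings share the horizon $H$, the price $c_0$, the penalty $c_d$, and the overhaul times $M_1,\ldots,M_{m+1}$; they differ only in the cycle cost function, with $\tilde{C}$ agreeing with $C$ on $[0,z]$ and equalling the tangent line $C(z)+(t-z)C'(z)$ for $t\ge z$. Consequently every policy $\Pi$ is feasible in both settings, has the same number of upgrades $N$ and the same value $S^{\Pi}$ in both, and, by~\eqref{eq:TotalCostsGeneral},
\begin{align*}
	\mathcal{K}^{\Pi}(H) - \tilde{\mathcal{K}}^{\Pi}(H) \;=\; \sum_{i=1}^{N+1} \bigl( C(T_i) - \tilde{C}(T_i) \bigr) \;=\; \sum_{i\,:\,T_i > z} \bigl( C(T_i) - \tilde{C}(T_i) \bigr),
\end{align*}
where $T_1,\ldots,T_{N+1}$ are the inter-upgrade times of $\Pi$ and the terms with $T_i\le z$ drop out.

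First I would establish the pointwise bound $C(t) \ge \tilde{C}(t) = C(z)+(t-z)C'(z)$ for all $t\in[z,H]$ --- precisely the statement that the original cycle costs grow beyond $z$ at least as fast as their tangent at $z$, i.e.\ the ``steep increase after $z$'' scenario. Granting it, the displayed difference is non-negative for every $\Pi$, so $\mathcal{K}^{\Pi}(H)\ge\tilde{\mathcal{K}}^{\Pi}(H)$ always, with equality whenever all inter-upgrade times of $\Pi$ lie in $[0,z]$. Now apply this to $\tilde{\Pi}^{*}$: by hypothesis its inter-upgrade times $\tilde{T}_1^{*},\ldots,\tilde{T}_{\tilde{N}^{*}+1}^{*}$ are all at most $z$, so equality holds and $\mathcal{K}^{\tilde{\Pi}^{*}}(H)=\tilde{\mathcal{K}}^{\tilde{\Pi}^{*}}(H)$, which by optimality of $\tilde{\Pi}^{*}$ in the alternate setting equals $\min_{\Pi}\tilde{\mathcal{K}}^{\Pi}(H)$. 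Hence for any feasible policy $\Pi$,
\begin{align*}
	\mathcal{K}^{\tilde{\Pi}^{*}}(H) \;=\; \min_{\Pi'}\tilde{\mathcal{K}}^{\Pi'}(H) \;\le\; \tilde{\mathcal{K}}^{\Pi}(H) \;\le\; \mathcal{K}^{\Pi}(H),
\end{align*}
so $\tilde{\Pi}^{*}$ is optimal for the original setting. Existence of $\tilde{\Pi}^{*}$ is routine: $\tilde{C}$ still satisfies the standing assumptions, so Lemma~\ref{lem:FiniteNumberOfReplacements} bounds the number of upgrades, and for each fixed number of upgrades the cost is continuous on a compact simplex.

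The hard part is the pointwise bound $\frac{C(t)-C(z)}{t-z}\ge C'(z)$ on $(z,H]$. It does not follow from Assumptions~\ref{ass:UpgradeCosts}--\ref{ass:FailureCosts} alone: since $C'(t)=-v'(t)+c_f(t)+k(t)h(t)$ and only $c_f+kh$ is guaranteed non-decreasing while $-v'$ may decrease, $C$ need not be convex on $[z,H]$ and can dip below its tangent at $z$ (for instance if the salvage value falls steeply near $z$ and then flattens). So this is exactly where the premise that the cycle costs ``increase extremely after $z$'' must be invoked --- either as the defining hypothesis of the scenario, or derived from a sharper structural condition such as convexity of $C$ on $[z,H]$, under which the tangent-line bound is automatic. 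Everything else is the short comparison above.
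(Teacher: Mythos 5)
Your proof is essentially the paper's proof: the paper likewise compares $\mathcal{K}^{\Pi}(H)$ and $\tilde{\mathcal{K}}^{\Pi}(H)$ policy by policy, uses $\tilde{C}(t)\le C(t)$ on $[0,H]$ to conclude $\mathcal{K}^{\Pi}(H)\ge\tilde{\mathcal{K}}^{\Pi}(H)$, and observes that equality holds for $\tilde{\Pi}^*$ because all of its inter-upgrade times are at most $z$, giving the same chain $\mathcal{K}^{\tilde{\Pi}^*}(H)=\tilde{\mathcal{K}}^*(H)\le\tilde{\mathcal{K}}^{\Pi}(H)\le\mathcal{K}^{\Pi}(H)$. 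Your remark that the tangent-line bound $C(t)\ge C(z)+(t-z)C'(z)$ on $[z,H]$ does not follow from Assumptions~\ref{ass:UpgradeCosts}--\ref{ass:FailureCosts} alone is well taken: the paper simply asserts $\tilde{C}(t)\le C(t)$ without justification, implicitly relying on the same ``steep increase after $z$'' premise you identify.
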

	
	We point out that Lemma~\ref{lem:SteepIncreaseCycleCostsAfterZ} may imply more information about the structure of the optimal upgrade policy. For example, suppose that there exists an alternate setting with the properties as in Lemma~\ref{lem:SteepIncreaseCycleCostsAfterZ} for some $z \in (0,H)$, and the cycle costs $C(t)$ are convex on $[0,z]$. By construction, note that $\tilde{C}(t)$ is convex on the entire interval $[0,H]$, and hence the optimal upgrade policy $\tilde{\Pi}^*$ is one with equidistant upgrade times. In other words, there is an optimal policy where we upgrade after equidistant time intervals that are at most equal to $z$. 
	
	To conclude this section, we would like to point out that Lemma~\ref{lem:SteepIncreaseCycleCostsAfterZ} does not imply that the number of upgrades always increases as the total cycle cost curve steepens. In fact, this is actually not true as we illustrate in the following example.
	
	\begin{figure}[htb!]
		\centering
		\caption{Upgrade costs over $[0,t]$ if $t<H$.}
		\label{fig:CounterIntuitiveExample}
		\begin{tikzpicture}[scale=0.8]
			\begin{axis}[domain=0:0.5, xmin=0, xmax=0.5,ymin=0,ymax=0.6,samples=300,legend style={at={(1.1,1)}, anchor=north west},legend pos=north west,no
				marks,legend entries={\small $C_A(t)$,\small $C_B(t)$},xlabel=$t$]
				
				%\addplot[dotted] {0.02};
				\addplot[dashed] {\x+\x^2/10};
				\addplot[] {2/3*\x^(3/2)};
				
			\end{axis}
			
		\end{tikzpicture}
	\end{figure}
	
	\begin{example}
		Suppose that $c_d=0$ (base case) and $c_0=0.02$, and $H=0.5$. For the cycle costs, consider the following. In setting A, let the cycle costs be given by	$C_A(t) = t + t^2/10 = \int_{0}^t (1+x/5) \, dx $,
		and for setting B, $C_B(t) = \frac{2}{3} t^{3/2} = \int_{0}^t \sqrt{x} \, dx$.
		
		We point out that $(1+x/5) \geq \sqrt{x}$ for all $x \in [0,H]$, and hence it also holds that $C_A(t) \geq C_B(t)$ for all $t \in [0,H]$. In particular, we see that the increase in cycle costs is steeper for setting~A than for setting~B, see Figure~\ref{fig:CounterIntuitiveExample}. Yet, as we see in Table~\ref{tab:CounterIntuitiveExample}, the total costs are minimized if $N^*_A = 0$ for setting~A, and if $N^*_B = 2$ in setting~B. In other words, although the total costs increase is steeper in setting~A, the optimal number of upgrades is larger in setting~B.

		\begin{table}[htb!]
			\centering
			\caption{Total costs values for setting A and B.} 
			\label{tab:CounterIntuitiveExample}
			\begin{tabular}{ | l | l | l |}
				\hline
				$N$ & $\mathcal{C}^N_A(H)$ & $\mathcal{C}^N_B(H)$\\ 
				\hline
				0 & 0.525 & 0.2357 \\
				1 & 0.5325 & 0.1867 \\
				2 & 0.5483 & 0.1761 \\
				3 & 0.5663 & 0.1779 \\
				\hline
			\end{tabular}
		\end{table}
		
		\begin{comment}
			\begin{minipage}{\textwidth}
				\begin{minipage}[b]{0.49\textwidth}
					\centering		
					%	\begin{figure}[htb!]
						%		\centering
						\begin{tikzpicture}
							\begin{axis}[domain=0:0.5, xmin=0, xmax=0.5,ymin=0,ymax=0.6,samples=300,legend style={at={(1.1,1)}, anchor=north west},legend pos=north west,no
								marks,legend entries={\small $C_A(t)$,\small $C_B(t)$},xlabel=$t$]
								
								%\addplot[dotted] {0.02};
								\addplot[dashed] {\x+\x^2/10};
								\addplot[] {2/3*\x^(3/2)};
								
							\end{axis}
							
						\end{tikzpicture}
						\captionsetup{width=.9\linewidth}
						\captionof{figure}{Upgrade costs over $[0,t]$ if $t<H$.}
						\label{fig:CounterIntuitiveExample}
						%	\end{figure}
				\end{minipage}
				\hfill
				\begin{minipage}[b]{0.49\textwidth}
					\centering
					%	\begin{table}[h!]
						%	\centering
						\begin{tabular}{ | l | l | l |}
							\hline
							$N$ & $\mathcal{C}^N_A(H)$ & $\mathcal{C}^N_B(H)$\\ 
							\hline
							0 & 0.525 & 0.2357 \\
							1 & 0.5325 & 0.1867 \\
							2 & 0.5483 & 0.1761 \\
							3 & 0.5663 & 0.1779 \\
							\hline
						\end{tabular}
						\vspace{0.4cm}
						\captionof{table}{Total costs values for setting A and B.}
						\label{tab:CounterIntuitiveExample}
						%\end{table}
					\end{minipage}
				\end{minipage}
			\end{comment}

		\end{example}

		\section{Conclusion}\label{sec:Conclusion}
		This paper considers a continuous-time stochastic model in order to determine an optimal upgrade policy for systems in an asset. This model is novel for combining several aspects that are relatively under-examined in existing literature: functionality gap, a predetermined overhaul plan, age-dependent cost functions and finite lifetime of the asset. For this model, we analytically characterize the structure of the optimal upgrade policy. For the base case {with no penalty for not upgrading at an overhaul moment}, we establish that it is optimal to never upgrade if the cycle costs are concave, and to upgrade at {equidistant time intervals} if the cycle costs are convex. {Based on these two results, we further characterize the structure of the optimal policy first for S-shaped cycle costs, and then for generalized cycle costs.} We use these {analytical} results as building blocks to design an efficient solution approach based on dynamic programming when the penalty for executing upgrades outside overhauls is non-negative.
		
		Naturally, the optimal upgrade policy depends heavily on the input parameters. Many intuitive sensitivity results can be made rigorous. For example, the optimal number of upgrades {cannot increase} as the {upgrade} price becomes larger. A more subtle approach needs to be taken for other input parameters: as the penalty {of upgrading at a non-overhaul moment} increases, the optimal upgrade policy can only change to one where the number of upgrades that are not executed jointly with overhauls is less. {However, surprisingly, that does not imply that the optimal number of upgrades is a non-increasing function of this penalty. Also, when the number of overhauls {increases}, the optimal number of upgrades can decrease.}
		Finally, the number of upgrades also does not necessarily have to be non-decreasing with the steepening of the cycle costs. 
		
		{Motivated by a real-life project involving an asset owner and system supplier in partnership, we developed a model that can be  used to collaboratively plan the system upgrades. However, for certain product categories, the incentives may not yet exist to make the upgrade decisions together. A future research direction can be to study the problem introduced in our paper from a game theoretical point of view to build a model of strategic interactions of the asset owner and the system supplier. Another future research direction can be to characterize the optimal upgrade policies that are robust against the lifetime extensions of assets.} Finally, future research can extend the problem considered in this paper to the upgrade planning of multiple systems in an asset.

\bibliographystyle{apalike}
\bibliography{UpgradePolicy.bib}

\begin{thebibliography}{}

\bibitem[IEC, 2019]{IEC62402}
 (2019).
\newblock {\em Obsolescence management - application guide}.
\newblock IEC 62402.

\bibitem[Arts et~al., 2019]{Arts2019}
Arts, J., Basten, R., and van Houtum, G.-J. (2019).
\newblock Maintenance service logistics.
\newblock In Zijm, H., Klumpp, M., Regattieri, A., and Heragu, S., editors,
  {\em Operations, Logistics and Supply Chain Management}, pages 493--517.
  Springer International Publishing, Cham.

\bibitem[Asadi et~al., 2022]{asadi2022overview}
Asadi, M., Hashemi, M., and Balakrishnan, N. (2022).
\newblock An overview of some classical models and discussion of the
  signature-based models of preventive maintenance.
\newblock {\em Applied Stochastic Models in Business \& Industry}.

\bibitem[Barlow and Hunter, 1960]{Barlow1960}
Barlow, R. and Hunter, L. (1960).
\newblock Optimum preventive maintenance policies.
\newblock {\em Operations Research}, 8(1):90--100.

\bibitem[Barlow and Proschan, 1965]{Barlow1965}
Barlow, R. and Proschan, F. (1965).
\newblock {\em Mathematical Theory of Reliability}.
\newblock John Wiley \& Sons, New York, NY.

\bibitem[Behfard et~al., 2015]{Behfard2015}
Behfard, S., van~der Heijden, M.~C., Al~Hanbali, A., and Zijm, W.~H. (2015).
\newblock Last time buy and repair decisions for spare parts.
\newblock {\em European Journal of Operational Research}, 244(2):498--510.

\bibitem[Boland, 1982]{Boland1982}
Boland, P.~J. (1982).
\newblock Periodic replacement when minimal repair costs vary with time.
\newblock {\em Naval Research Logistics Quarterly}, 29(4):541--546.

\bibitem[Boland and Proschan, 1982]{BolandProschan1982}
Boland, P.~J. and Proschan, F. (1982).
\newblock Periodic replacement with increasing minimal repair costs at failure.
\newblock {\em Operations Research}, 30(6):1183--1189.

\bibitem[Chien, 2010]{chien2010effect}
Chien, Y.-H. (2010).
\newblock The effect of a pro-rata rebate warranty on the age replacement
  policy with salvage value consideration.
\newblock {\em IEEE Transactions on Reliability}, 59(2):383--392.

\bibitem[Dagpunar and Jack, 1994]{Dagpunar1994}
Dagpunar, J. and Jack, N. (1994).
\newblock Preventative maintenance strategy for equipment under warranty.
\newblock {\em Microelectronics Reliability}, 34(6):1089--1093.

\bibitem[{de Jonge} and Scarf, 2020]{DeJonge2020}
{de Jonge}, B. and Scarf, P.~A. (2020).
\newblock A review on maintenance optimization.
\newblock {\em European Journal of Operational Research}, 285(3):805--824.

\bibitem[Gertsbakh, 2000]{gertsbakh2000reliability}
Gertsbakh, I. (2000).
\newblock {\em Reliability theory: with applications to preventive
  maintenance}.
\newblock Springer Science \& Business Media.

\bibitem[Hopp and Nair, 1994]{Hopp1994}
Hopp, W.~J. and Nair, S.~K. (1994).
\newblock Markovian deterioration and technological change.
\newblock {\em IIE Transactions}, 26(6):74--82.

\bibitem[Li and Tomlin, 2022]{li2022after}
Li, C. and Tomlin, B. (2022).
\newblock After-sales service contracting: Condition monitoring and data
  ownership.
\newblock {\em Manufacturing \& Service Operations Management},
  24(3):1494--1510.

\bibitem[Mercier, 2008]{Mercier2008}
Mercier, S. (2008).
\newblock Optimal replacement policy for obsolete components with general
  failure rates.
\newblock {\em Applied Stochastic Models in Business and Industry},
  24(3):221--235.

\bibitem[Nair, 1995]{Nair1995}
Nair, S.~K. (1995).
\newblock Modeling strategic investment decisions under sequential
  technological change.
\newblock {\em Management Science}, 41(2):282--297.

\bibitem[Nair and Hopp, 1992]{Nair1992}
Nair, S.~K. and Hopp, W.~J. (1992).
\newblock A model for equipment replacement due to technological obsolescence.
\newblock {\em European Journal of Operational Research}, 63(2):207--221.

\bibitem[Nguyen et~al., 2013]{Nguyen2013}
Nguyen, T.~K., Yeung, T.~G., and Castanier, B. (2013).
\newblock Optimal maintenance and replacement decisions under technological
  change with consideration of spare parts inventories.
\newblock {\em International Journal of Production Economics}, 143(2):472--477.

\bibitem[{\"O}ner et~al., 2015]{Oner2015}
{\"O}ner, K.~B., Kiesm{\"u}ller, G.~P., and van Houtum, G.-J. (2015).
\newblock On the upgrading policy after the redesign of a component for
  reliability improvement.
\newblock {\em European Journal of Operational Research}, 244(3):867--880.

\bibitem[Pierskalla and Voelker, 1976]{Pierskalla1976}
Pierskalla, W.~P. and Voelker, J. (1976).
\newblock A survey of maintenance models: The control and surveillance of
  deteriorating systems.
\newblock {\em Naval Research Logistics Quarterly}, 23:353--388.

\bibitem[Rajagopalan et~al., 1998]{Rajagopalan1998b}
Rajagopalan, S., Singh, M.~R., and Morton, T.~E. (1998).
\newblock Capacity expansion and replacement in growing markets with uncertain
  technological breakthroughs.
\newblock {\em Management Science}, 44(1):12--30.

\bibitem[Sanoubar et~al., 2020]{Sanoubar2020}
Sanoubar, S., Maillart, L.~M., and Prokopyev, O.~A. (2020).
\newblock Age-replacement policies under age-dependent replacement costs.
\newblock {\em IISE Transactions}, 0(0):1--12.

\bibitem[Schouten et~al., 2022]{schouten2022maintenance}
Schouten, T.~N., Dekker, R., Hekimo{\u{g}}lu, M., and Eruguz, A.~S. (2022).
\newblock Maintenance optimization for a single wind turbine component under
  time-varying costs.
\newblock {\em European Journal of Operational Research}, 300(3):979--991.

\bibitem[Segawa et~al., 1992]{Segawa1992}
Segawa, Y., {Masamitsu Ohnishi}, and {Toshihide Ibaraki} (1992).
\newblock Optimal minimal-repair and replacement problem with age dependent
  cost structure.
\newblock {\em Computers \& Mathematics with Applications}, 24(1):91--101.

\bibitem[Sols et~al., 2012]{Sols2012}
Sols, A., Romero, J.~J., and Cloutier, R.~J. (2012).
\newblock Performance-based logistics and technology refreshment programs:
  Bridging the operational-life performance capability gap in the spanish f-100
  frigates.
\newblock {\em Systems Engineering}, 15:422--432.

\bibitem[Tilquin and Cléroux, 1975]{Tilquin1975}
Tilquin, C. and Cléroux, R. (1975).
\newblock Periodic replacement with minimal repair at failure and general cost
  function.
\newblock {\em Journal of Statistical Computation and Simulation}, 4(1):63--77.

\bibitem[Tomczykowski, 2003]{Tomczykowski2003}
Tomczykowski, W. (2003).
\newblock A study on component obsolescence mitigation strategies and their
  impact on {R}\&{M}.
\newblock In {\em Annual Reliability and Maintainability Symposium}, pages
  332--338.

\bibitem[Wang, 2002]{Wang2002}
Wang, H. (2002).
\newblock A survey of maintenance policies of deteriorating systems.
\newblock {\em European Journal of Operational Research}, 139(3):469 -- 489.

\bibitem[Zhao et~al., 2017]{Zhao2017}
Zhao, X., Al-Khalifa, K.~N., {Magid Hamouda}, A., and Nakagawa, T. (2017).
\newblock Age replacement models: A summary with new perspectives and methods.
\newblock {\em Reliability Engineering \& System Safety}, 161:95--105.

\end{thebibliography}

\newpage
\appendix

\section{Notation}\label{app:Notation}
\begin{table}[h!]
	\centering
	\caption{Overview of notation.}
	\label{tab:NotationOverview}
	\begin{tabular}{ | l | p{13.5cm} |}
		\hline
		Variable & Meaning \\
		\hline
		$H$ & Lifetime of the asset \\
		$c_0$ & Price of a system upgrade\\
		$v(t)$ & Salvage value of the current system that has been in use for time $t$ \\
		$c_f(t)$ & Penalty for missing functionality of the current system that has been in use for time $t$ \\
		$h(t)$ & Failure rate of a system that has been in use for time $t$ \\
		$k(t)$ & Expected repair costs for a failed system that has been in use for time $t$ \\
		$c_d$ & Penalty for executing an upgrade not during an overhaul \\
		$m$ & The number of overhauls \\
		$M_{i}$ & Time between overhaul $i-1$ and $i$, with $M_0=0$ and $\sum_{i=1}^{m+1} M_i=H$  \\
		$\Pi$ & The upgrade policy (specified by the upgrade times) \\
		$N$ & The number of upgrades \\
		$T_i$ & Time between upgrade $i-1$ and $i$, with $T_0=0$ and $\sum_{i=1}^{N+1} T_i=H$ \\
		$C(t)$ & Cycle costs as specified in~\eqref{eq:CycleCosts} \\
		$S^{\Pi}$ & The number of upgrades not jointly executed with an overhaul under policy~$\Pi$\\
		$\mathcal{K}^{\Pi}(H)$ & The total costs over a lifetime~$H$ under policy~$\Pi$, as given in~\eqref{eq:TotalCostsGeneral}\\
		$\mathcal{C}^{\Pi}(H)$ & The total costs over a lifetime~$H$ under policy~$\Pi$ in the base case ($c_d=0$), as given in~\eqref{eq:TotalCostsBaseCase}\\
		$\bar{N}$ & Upper bound for the number of upgrades under the optimal policy, given by $(C(H)+v(0))/(c_0-v(0))$\\
		$\mathcal{C}^N(H) $ & The total costs if $N$ upgrades are executed after every $H/(N+1)$ time, as given in~\eqref{eq:TotalCostsBaseNUpgrades}\\
		$\mathcal{T}^N$ & Set of candidate solutions for the optimal inter-upgrade times with $N$ upgrades, see~\eqref{eq:CandidateSolutionsNUpgrades}\\
		$\tilde{\mathcal{C}}^N(H,t)$ & Total costs if we execute $N$ upgrades every $(H-t)/N$ time, see~\eqref{eq:TotalCostsBaseSShape}\\
		$\mathcal{K}^{\Pi}(t)$ & The total costs in $(H-t,H]$ under policy~$\Pi$\\
		%$\hat{\mathcal{K}}^{*}(t)$ & The minimal total costs in $(H-t,H]$ among all policies that only upgrade during overhaul moments\\
		\hline
	\end{tabular}
\end{table}

\newpage

\section{Pseudo-code for solution approaches}\label{app:SolutionApproach}
\subsection{Solution algorithms for the base case}
\label{sec:SolutionApproachB1}
{We first present Algorithm~\ref{alg:FindOptimalAmongEquidistantConvex} to find the optimal number of upgrades in the base case with a convex cycle cost function. In practice, the optimal number
	of upgrades is  typically not too large, e.g., not exceeding 10 because the time between
	upgrades is often no less than multiple years for an asset with a lifespan of 30 years. So,
	Algorithm~\ref{alg:FindOptimalAmongEquidistantConvex} terminates quickly for realistic instances. Note that a bisection approach could also be used to improve the efficiency of the algorithm.}

\begin{algorithm}[htb]
	\caption{Algorithm to find optimal upgrade policy in base case with convex cycle cost function}
	\label{alg:FindOptimalAmongEquidistantConvex}
	\fbox{\begin{minipage}[b]{0.9\textwidth}
			\begin{algorithmic}[1]
				\REQUIRE Values $H$, $c_0$, and functions $v(t)$, $c_f(t)$, $k(t)$ and $h(t)$ for $t\in [0,H]$.
				\STATE Set $N=0$;\\ 
				\While{$\mathcal{C}^{N+1}(H) < \mathcal{C}^{N}(H)$}{Set $N=N+1$;}
				\RETURN $N^*=N$ and $T_i^*=H/(N+1)$, $i=1,\ldots,N^*+1$. 
			\end{algorithmic}
	\end{minipage}}\\
\end{algorithm}

Algorithm~\ref{alg:FindOptimalBaseSShaped} finds the optimal number of upgrades in the base case with an S-shaped cycle cost function.
\begin{algorithm}[h!]
	\caption{Algorithm to find optimal upgrade policy and corresponding minimum costs in base case with S-shaped cycle costs}
	\label{alg:FindOptimalBaseSShaped}
	\fbox{\begin{minipage}[b]{0.9\textwidth}
			\begin{algorithmic}[1]
				\REQUIRE Values $H$, $c_0$, and functions $v(t)$, $c_f(t)$, $k(t)$ and $h(t)$ for $t\in [0,H]$ (with point of inflection $x$).	
				\STATE Set $N=\lceil H/x -1\rceil$;\\ 
				\While{$\mathcal{C}^{N+1}(H) < \mathcal{C}^{N}(H)$}{Set $N=N+1$;}
				\STATE Set $N^*=N$, $T^*=H/(N+1)$ and $C=\mathcal{C}^{N}(H)$. 
				\IF{$C(H) < C$}
				\STATE Set $C=C(H)$, $N^*=0$ and $T^*=0$;
				\ENDIF
				\FOR{$1 \leq N \leq \bar{N} = \lfloor (C(H)+v(0))/(c_0-v(0)) \rfloor$}
				\STATE Determine $t^* \in[\max\{H/(N+1),x\},H]$ that minimizes $\tilde{\mathcal{C}}^N(H,t^*)$ as in~\eqref{eq:TotalCostsBaseSShape};
				\IF{$t^* \in (x,H)$ and $\tilde{\mathcal{C}}^N(H,t^*) < C$}
				\STATE Set $C=\tilde{\mathcal{C}}^N(H,t^*)$, $N^*=N$ and $T^*=(H-t^*)/N^*$;
				\ENDIF
				\ENDFOR
				\RETURN $N^*$ and $T_i^*=T^*$, $i=1,\ldots,N^*$, $T_{N^*+1}=H-N^* T^*$ and $\mathcal{C}^*(H)=C$. 
			\end{algorithmic}
	\end{minipage}}
\end{algorithm}

Finally, we consider generalized cycle cost functions that can be partitioned as described in Section~\ref{sec:GeneralCycleCosts} with $k$ intervals, and describe how we can (greedily) derive the optimal policy by exploiting several derived properties in our analysis. Suppose that the optimal upgrade policy has $N_i^*$ inter-upgrade times in interval $\mathcal{H}_i$, $i=1,\ldots,k$. Proposition~\ref{prop: general_cycle_cost} and Lemma~\ref{lem:FiniteNumberOfReplacements} show that the number of possibilities for $(N_1^*,\ldots,N_k^*)$ is finite.
%We first determine the inter-upgrade times that minimize the total costs for a given$(N_1,\ldots,N_k)$, and then simply enumerate over the finite number of possibilities of $(N_1,\ldots,N_k)$ to identify the overall minimal total costs (and corresponding optimal upgrade times).
%\bl{The derivation of the optimal inter-upgrade times for a given $(N_1,\ldots,N_k)$ can be done analytically. That is, in view of~\eqref{eq:NecessaryConditionC}, the derivatives are all equal to one another at the optimal inter-upgrade times. Moreover, Proposition~\ref{prop: general_cycle_cost} shows that an optimal upgrade policy exists such that no interval contains two different inter-upgrade times. In other words, we analytically derive which combinations of inter-upgrade times $(T_1,\ldots,T_n)$ such that $C'(T_i)=C'(T_j)$ for all $i,j \in \{1,\ldots,k\}$ for which $N_i,N_j>0$ result in $\sum_{i=1}^n N_i T_i=H$ (if any).}
%
{More specifically, Proposition~\ref{prop: general_cycle_cost} and Lemma~\ref{lem:FiniteNumberOfReplacements} show that there exists an optimal upgrade policy where the number of upgrades falls in the set
	\begin{align}
		\mathcal{N} := \left\{(n_1,\ldots,n_k) \in \mathbb{N}_{\geq 0}^k : n_i \in \{0,1\} \textrm{ for } i \textrm{ odd}, \;\;\; n_i \leq \bar{N} = (C(H)+v(0))/(c_0-v(0)) \;\; \forall i=1,\ldots,k \right\}.
	\end{align}
	Moreover, write $\underline{A}:=\min_{t\in[0,H]}\{C'(t)\}\geq 0$ and $\bar{A}:=\max_{t\in[0,H]}\{C'(t)\} < \infty$. Since the cycle cost function's derivative exists on $[0,H]$ and is non-negative, we can define for any $\alpha \in [\underline{A},\bar{A}]$,
	\begin{align}
		\mathcal{T}_\alpha := \left\{(t_1,\ldots,t_k) \in \{[0,H]\cup \emptyset\}^k: \left(\textrm{either } t_i \in \mathcal{H}_i  \textrm{ and } C'(t_i) = \alpha \textrm{, or } t_i=\emptyset \right) \;\; \forall i=1,\ldots,k \right\}.
	\end{align}
	In view of~\eqref{eq:NecessaryConditionC} and Proposition~\ref{prop: general_cycle_cost}, we observe that there exists an optimal upgrade policy where the derivative of the inter-upgrade times is given by some $\alpha \in [\underline{A},\bar{A}]$, and hence is contained in the set $\mathcal{T}_{\alpha^*}$.
}

{Consequently, a greedy algorithm to find an optimal upgrade policy can be described as follows: for~every $(n_1,\ldots,n_k) \in \mathcal{N}$, determine all upgrade policies $(t_1,\ldots,t_k) \in \bigcup_{\alpha\in [\underline{A},\bar{A}]} \mathcal{T}_\alpha$ that yield $\sum_{i=1}^k n_i t_i I_{\{t_i\neq \emptyset\}}=H$ (if any). Minimize the total costs under the determined policies, and save the corresponding outcome (i.e.~minimized costs and corresponding inter-upgrade times). Enumerate over all (finite) values of $\mathcal{N}$ to establish the overall minimal costs and corresponding upgrade policy.
	%}
%
% \bl{We point out that in the particular case of strict convexity and concavity on the partitioned intervals, $\mathcal{T}_\alpha$ are sets that contain a single (unique) point for every $\alpha\in [\underline{A},\bar{A}]$. In that case, we determine all  $\alpha\in [\underline{A},\bar{A}]$ such that $\sum_{i=1}^k n_i t_i(\alpha)=H$ (if any).}
%
%\bl{
	%The set $\mathcal{N}$ over which we enumerate has been chosen rather crudely, and there are many ways to reduce this space and hence speed up the search for the optimal policy (as also illustrated by Algorithm~\ref{alg:FindOptimalBaseSShaped} in case of S-shaped cycle costs). Nevertheless, 
	This greedy algorithm can be used to efficiently obtain the optimal upgrade policy as long as $k$ is not too large.}

\subsection{Solution algorithms for positive penalty costs}
\label{sec:SolutionApproachB2}
{For Algorithm~\ref{alg:FindOptimalNoIntermediate} and Algorithm~\ref{alg:FindOptimalGeneral}, we extend the notation for the total costs $\mathcal{K}^\Pi(t)$ as the total costs in $(H-t,H]$ under policy~$\Pi$. Let $\mathcal{K}^*(t)$ denote the minimal costs over all upgrade policies on this interval. Moreover, we denote $\hat{K}^*(t)$ as the minimal total costs in $(H-t,H]$ among the policies where upgrades are only executed during overhaul moments.}
\begin{algorithm}[htb]
	\caption{Algorithm to find optimal upgrade policy among the class where upgrades are only executed during overhaul moments.}
	\label{alg:FindOptimalNoIntermediate}
	\fbox{\begin{minipage}[b]{0.9\textwidth}
			\begin{algorithmic}[1]
				\REQUIRE Values $H$, $c_0$, and functions $v(t)$, $c_f(t)$, $k(t)$ and $h(t)$ for $t\in [0,H]$.
				\STATE Initiate $\hat{K}^*(M_{m+1})=C(M_{m+1})$ and save $\hat{\mathcal{T}}(m+1) = \{M_{m+1}\} $;
				\FOR{$l=m,\ldots,1$}
				\STATE Set $\hat{K}^*\left(\sum_{i=l}^{m+1} M_{i}\right)=C\left(\sum_{i=l}^{m+1} M_{i}\right)$ and $\hat{\mathcal{T}}(l) = \{\sum_{i=l}^{m+1} M_{i}\}$;		
				\FOR{$j=l,\ldots,m$}
				\STATE Set $x= C\left(\sum_{i=l}^{j} M_{i}\right) + c_0 + \hat{K}^*\left(\sum_{i=j+1}^{m+1} M_{i}\right)$;
				\IF{$x <\hat{K}^*\left(\sum_{i=l}^{m+1} M_{i}\right)$}
				\STATE Set $\hat{K}^*\left(\sum_{i=l}^{m+1} M_{i}\right) = x$ and $\hat{\mathcal{T}}(l) = \{\sum_{i=l}^{j} M_{i}\}\cup \hat{\mathcal{T}}(j+1)$;	
				\ENDIF
				\ENDFOR		
				\ENDFOR
				\RETURN Total costs $\hat{K}^*(H)=\hat{K}^*\left(\sum_{i=1}^{m+1} M_{i}\right)$ and upgrade policy $\hat{\mathcal{T}}(1)$.
			\end{algorithmic}
	\end{minipage}}
\end{algorithm}
%
%
%In case that upgrades do not necessarily have to be executed jointly with overhauls (but does come with penalty $c_d$)
%For the second interval, we (slightly) extend the notation for the total costs $\mathcal{K}^\Pi(t)$ as the total costs in $(H-t,H]$ under policy~$\Pi$, with overhaul moments that fall into this interval. Let $\mathcal{K}^*(t)$ denote the minimal costs over all upgrade policies on this interval. The total costs of the second interval $(T,H]$ is thus given by $K^*(H-T)= K^*(\sum_{i=j+1}^{m+1} M_i)$, which poses an identical problem with a shorter horizon length. Since $m$ is relatively small, we can therefore use dynamic program to determine the optimal policy, see Algorithm~\ref{alg:FindOptimalGeneral}.
\begin{algorithm}[h!]
	\caption{Algorithm to find optimal upgrade policy.}
	\label{alg:FindOptimalGeneral}
	\fbox{\begin{minipage}[b]{0.9\textwidth}
			\begin{algorithmic}[1]
				\REQUIRE Values $H$, $c_0$, $c_d$ and functions $v(t)$, $c_f(t)$, $k(t)$ and $h(t)$ for $t\in [0,H]$.
				\STATE Initiate ${K}^*(M_{m+1})=\hat{\mathcal{C}}^*(M_{m+1})$ and save corresponding upgrade times ${\mathcal{T}}(m+1)$;
				\FOR{$l=m,\ldots,1$}
				\STATE Set ${K}^*\left(\sum_{i=l}^{m+1} M_{i}\right)=\hat{\mathcal{C}}^*\left(\sum_{i=l}^{m+1} M_{i}\right)$ and save corresponding upgrade times ${\mathcal{T}}(l)$;		
				\FOR{$j=l,\ldots,m$}
				\STATE Set $x= \hat{\mathcal{C}}^*\left(\sum_{i=l}^{j} M_{i}\right) + c_0 + {K}^*\left(\sum_{i=j+1}^{m+1} M_{i}\right)$ and save upgrade times $\mathcal{T}$ on $\left[\sum_{i=1}^{l-1} M_{i}, \sum_{i=1}^{j} M_{i} \right]$;
				\IF{$x < {K}^*\left(\sum_{i=l}^{m+1} M_{i}\right)$}
				\STATE Set ${K}^*\left(\sum_{i=l}^{m+1} M_{i}\right) = x$ and ${\mathcal{T}}(l) = \mathcal{T} \cup {\mathcal{T}}(j+1)$;	
				\ENDIF
				\ENDFOR		
				\ENDFOR
				\RETURN Total costs ${K}^*(H)={K}^*\left(\sum_{i=1}^{m+1} M_{i}\right)$ and upgrade policy ${\mathcal{T}}(1)$.
			\end{algorithmic}
	\end{minipage}}
\end{algorithm}
{We point that the only difference between Algorithm~\ref{alg:FindOptimalNoIntermediate} and Algorithm~\ref{alg:FindOptimalGeneral} is that in step 1, 3 and 5, the cycle cost function $C(\cdot)$ is replaced by $\hat{\mathcal{C}}(\cdot)$. In other words, we need to find the optimal upgrade policy in the base case with upgrade price $c_0+c_d$. Section~\ref{sec:SolutionApproachB2}.1 provides the algorithms to do so in case of convex, S-shaped and generalized cycle costs.
}

\newpage
\section{Proofs}\label{app:proofs}
\begin{proof}[Proof of Lemma~\ref{lem:FiniteNumberOfReplacements}]
Write $\Pi_0$ as the policy to never upgrade, and note that $\mathcal{K}^{\Pi_0}(H) = C(H)$. For any policy~$\Pi$ with $N > \bar{N}$, $\mathcal{K}^\Pi(H) \geq N c_0+(N+1)(-v(0)) > \bar{N} (c_0-v(0)) -v(0) = C(H) = \mathcal{K}^{\Pi_0}(H) \geq \mathcal{K}^*(H).$
\end{proof}

\subsection{Proofs for the base case}\label{app:proofsBaseCase}

\begin{proof}[Proof of Proposition~\ref{prop:EquidistantResult}]
We need to minimize the total costs by finding the optimal policy that describes the number of upgrades $N^*$ and corresponding $T_1^*,\ldots,T^*_N$ such that
\begin{align}
	\mathcal{C}^*(H) = \min_\Pi \mathcal{C}^\Pi(H).
\end{align}
Due to Lemma~\ref{lem:FiniteNumberOfReplacements}, we know that it is never optimal to upgrade more than $\bar{N}$ times. First, we answer the question what the optimal inter-upgrade times are if the optimal number of upgrades is given by $N^*$. 

If $N^*=0$, then there are no upgrades during the lifetime and $\mathcal{C}^*(H) = C(H)$. Otherwise, $1 \leq N^* \leq \bar{N}$ and the policy also needs to describe the inter-upgrade times. Then,
\begin{align*}
	\mathcal{C}^*(H) = N^* c_0 + \min_{\substack{T_1,\ldots,T_{N^*+1}  \geq 0,\\ T_1+\ldots+T_{N^*+1} = H}} \left\{ \sum_{i=1}^{N^*+1} C(T_i)   \right\}.
\end{align*}
The term in the minimization operation is the summation of $N^*+1$ identical convex non-decreasing functions in $T_i$, under the condition that the sum of the inter-upgrade times $T_i$ is equal to $H$. By the convexity property, this is minimized if $ T_1=\ldots=T_{N^*+1}=H/(N^*+1)$. Using this observation together with Lemma~\ref{lem:FiniteNumberOfReplacements} concludes the proof.
\end{proof}

\begin{proof}[Proof of Lemma~\ref{lem:ConvexityOfCN}]
To show that $\mathcal{C}^N(H)$ is convex in $N$, we require that $\Delta^2 \mathcal{C}^N(H)  \geq 0$. Note that
\begin{align*}
	\Delta \mathcal{C}^N(H) = \mathcal{C}^{N+1}(H) - \mathcal{C}^N(H) = c_0 + C\left(\frac{H}{N+2}\right) - (N+1) \left(C\left(\frac{H}{N+1}\right)-C\left(\frac{H}{N+2}\right)\right),
\end{align*}
and hence 
\begin{align*}
	\Delta^2 \mathcal{C}^N(H) &= \Delta \mathcal{C}^{N+1}(H) - \Delta \mathcal{C}^N(H) \\
	&= (N+1) \left(C\left(\frac{H}{N+1}\right)-C\left(\frac{H}{N+2}\right)\right) - (N+3) \left(C\left(\frac{H}{N+2}\right)-C\left(\frac{H}{N+3}\right)\right).
\end{align*}
Write 
\begin{align*}
	y_i = \frac{H}{N+2} + i \frac{H}{(N+1)(N+2)(N+3)}, \hspace{2cm} z_i = \frac{H}{N+3} + i \frac{H}{(N+1)(N+2)(N+3)},
\end{align*}
and note that $y_{N+3}=H/(N+1)$ and $z_{N+1}=y_0 = H/(N+2)$. Therefore,
\begin{align*}
	\Delta^2 \mathcal{C}^N(H) &= (N+1) \sum_{i=1}^{N+3} \left(C(y_{i})-C(y_{i-1})\right) - (N+3)  \sum_{j=1}^{N+1} \left(C(z_{j})-C(z_{j-1})\right) \\
	&= \sum_{i=1}^{N+3} \sum_{j=1}^{N+1} \left(\left(C(y_{i})-C(y_{i-1})\right) - \left(C(z_{j})-C(z_{j-1})\right)\right)\geq 0,
\end{align*}
where the final inequality follows since every single term within the summations is non-negative due to the convexity and the non-decreasing property of the cycle cost function.
\end{proof}

\begin{proof}[Proof of Lemma~\ref{lem:ConcaveNeverUpgrade}]
Suppose that there is an optimal policy~$\Pi$ for which it is optimal to upgrade exactly $N \geq 1$ times. Since $C(t)$ is concave and non-decreasing in $t\geq 0$, the term $\sum_{i=1}^{N+1} C(T_i)$ is minimized if $T_1=\ldots=T_N=0$ and $T_{N+1}=H$. However, in that case it follows that
\begin{align}
	\mathcal{C}^{\Pi}(H) = N (c_0 - v(0) ) + C(H) > C(H) , 
\end{align}
where the latter corresponds to the total costs under the policy to never execute an upgrade. This contradicts the hypothesis.
\end{proof}

{
	\begin{lemma}\label{lem:SShapedPropertyUpgradeTime}
		Let the cycle costs have an S-shape with point of inflection $x \in (0,H)$ that satisfies the technical requirement. If $N^*\geq 1$, then there exists an optimal upgrade policy where for all $i=1,\ldots,N^*+1$ it holds that $T_i^* \not\in \left(\min\{x,H/(N^*+1)\},\max\{x,H/(N^*+1)\} \right)$.
	\end{lemma}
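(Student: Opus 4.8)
The plan is to fix the optimal number of upgrades $N:=N^{*}$ and the equidistant spacing $\mu:=H/(N^{*}+1)$, and to reason about the set of cost‑minimizing policies with \emph{exactly} $N^{*}$ upgrades, i.e.\ the minimizers of $\sum_{i=1}^{N^{*}+1}C(T_i)$ over the simplex $\{\,T_i\ge 0,\ \sum_i T_i=H\,\}$; this set is nonempty (the objective is continuous on a compact set) and compact. If $\mu=x$ the forbidden interval is empty and there is nothing to prove, so assume $\mu\neq x$. The essential device is a secondary, tie‑breaking objective: among all optimal $N^{*}$‑upgrade policies I would pick one, $T^{*}$, that \emph{minimizes} $\sum_i (T_i)^{2}$ when $\mu<x$ and \emph{maximizes} $\sum_i (T_i)^{2}$ when $\mu>x$. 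I will also use the fact already noted in the paper that every cost‑minimizing policy has all inter‑upgrade times strictly positive (because $c_0>v(0)$). The claim is that this $T^{*}$ has no inter‑upgrade time strictly between $x$ and $\mu$.

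Consider first $\mu<x$, so the forbidden interval is $(\mu,x)$, which lies in the convex region $[0,x]$. Suppose towards a contradiction that $T_i^{*}\in(\mu,x)$ for some $i$. Since $\sum_k T_k^{*}=(N^{*}+1)\mu$ and $T_i^{*}>\mu$, averaging forces some $j$ with $T_j^{*}<\mu\le x$; both $T_j^{*},T_i^{*}$ lie in $[0,x]$. I would look at the one‑parameter family obtained by shifting mass between slots $i$ and $j$, $g(\delta):=C(T_i^{*}-\delta)+C(T_j^{*}+\delta)$, on the $\delta$‑interval where both arguments stay in $[0,x]$; because $T_j^{*}>0$ and $T_i^{*}<x$ this interval contains $0$ in its interior, and there $g$ is convex. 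Optimality of $T^{*}$ gives $g(\delta)\ge g(0)$ throughout, so $0$ is an interior minimizer of a convex function, whence $g'(0)=0$, i.e.\ $C'(T_i^{*})=C'(T_j^{*})$; since $C$ is convex (so $C'$ nondecreasing) on $[0,x]$, this forces $C$ to be affine on $[T_j^{*},T_i^{*}]$. Hence $g$ is constant on $[0,T_i^{*}-T_j^{*}]$, and I may redistribute the mass $T_i^{*}+T_j^{*}$ between the two slots to $(\mu,\ T_i^{*}+T_j^{*}-\mu)$ at no change in cost, producing another optimal $N^{*}$‑upgrade policy. But replacing $\{T_j^{*}<\mu<T_i^{*}\}$ by $\{\mu,\ T_i^{*}+T_j^{*}-\mu\}$ strictly decreases the spread of the pair at fixed sum, hence strictly decreases $\sum_k (T_k)^{2}$, contradicting the choice of $T^{*}$.

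Now consider $\mu>x$, so the forbidden interval is $(x,\mu)$, lying in the concave region $[x,H]$. Suppose $T_i^{*}\in(x,\mu)$ for some $i$. Since $\sum_k T_k^{*}=(N^{*}+1)\mu$ and $T_i^{*}<\mu$, averaging forces some $J$ with $T_J^{*}>\mu>x$; both $T_i^{*},T_J^{*}$ lie in $[x,H]$. I would push $T_i^{*}$ down and $T_J^{*}$ up: with $g(\delta):=C(T_i^{*}-\delta)+C(T_J^{*}+\delta)$ defined for $\delta$ in a neighborhood of $[0,T_i^{*}-x]$ on which both arguments remain in $[x,H]$ (feasible since $T_J^{*}+T_i^{*}\le H$ and $x>0$), with $0$ interior, $g$ is concave there and optimality gives $g(\delta)\ge g(0)$. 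A concave function with an interior minimizer is constant on that neighborhood, so in particular $g(T_i^{*}-x)=g(0)$; moving to $\delta=T_i^{*}-x$ sets $T_i^{*}$ to exactly $x$ (the endpoint of the interval, not its interior) and raises $T_J^{*}$ to $T_J^{*}+T_i^{*}-x>\mu$, giving another optimal $N^{*}$‑upgrade policy. But $\{x,\ T_i^{*}+T_J^{*}-x\}$ has strictly larger spread than $\{T_i^{*},T_J^{*}\}$ at fixed sum, so $\sum_k (T_k)^{2}$ strictly increases, contradicting the choice of $T^{*}$. This finishes both cases; the technical requirement on $x$ is what lets us identify unambiguously the maximal convex initial stretch $[0,x]$ and concave final stretch $[x,H]$ used above.

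I expect the main obstacle to be the $\mu<x$ case: since $T^{*}$ is already a global cost‑minimizer, no single coordinate exchange can strictly lower the cost, so the argument must instead \emph{extract from optimality the structural fact} that $C$ is affine on $[T_j^{*},T_i^{*}]$ and then exploit that flat stretch, using $\sum_k (T_k)^{2}$ only to break the tie. Secondary care points are the feasibility ranges of the perturbations (so that $0$ is genuinely interior and the redistributed values stay in the simplex) and the differentiability of $C$: the paper works with $C'$, so $C$ is differentiable and the step ``$g'(0)=0\Rightarrow C$ affine on $[T_j^{*},T_i^{*}]$'' goes through; if one only wants $C$ differentiable in a weak sense, that step should be phrased via monotonicity of $C'$ / one‑sided derivatives.
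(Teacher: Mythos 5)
Your proof is correct, and it takes a genuinely different route from the paper's. The paper fixes an offending inter-upgrade time (w.l.o.g.\ $T_{N+1}$, ``by symmetry''), invokes the global first-order condition $C'(T_i)=C'(T_{N+1})$ from~\eqref{eq:NecessaryConditionC}, introduces the level sets $[a,b]\subset[0,x]$ and $[c,d]\subset[x,H]$ of $C'$ on which $C$ is linear, and then splits on $x\lessgtr H/(N+1)$, producing in one sub-case a \emph{strictly} cheaper policy (this is where the technical requirement~\eqref{eq:TecnicalRequirement} enters) and in the other an equal-cost reshuffling with no time in the forbidden interval. You instead select an extremal element of the (compact, nonempty) set of cost-minimizers for fixed $N^*$ via the secondary functional $\sum_k T_k^2$, and run two pairwise-exchange arguments: in the convex region the interior-minimum condition forces $C$ to be affine on $[T_j^*,T_i^*]$, so a spread-reducing redistribution preserves cost and contradicts minimality of $\sum_k T_k^2$; in the concave region the exchange function is concave with an interior minimizer, hence constant, so pushing $T_i^*$ down to $x$ preserves cost and contradicts maximality of $\sum_k T_k^2$. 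Both arguments are sound (your feasibility checks — $0$ interior to the perturbation range, $T_J^*+T_i^*-x<H$, strict positivity of all $T_i^*$ because $c_0>v(0)$ — all go through). What your approach buys is a cleaner, symmetric treatment of the two cases with no sub-case on the number of times in the concave region, and the tie-breaker packages the existential ``there exists an optimal policy'' neatly; notably, you never actually use the technical requirement, only convexity on $[0,x]$ and concavity on $[x,H]$, so your argument establishes the statement for any admissible inflection point. What the paper's approach buys is an explicit description of the level-set structure of all candidate optima, which it reuses downstream (e.g.\ in~\eqref{eq:OptimalCandidatesSShape}). One small caveat you already flag yourself: the step $g'(0)=0$ presumes two-sided differentiability of $C$, which the paper also assumes implicitly by writing $C'$; phrasing it with one-sided derivatives of the convex function $g$ would make it airtight.
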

}

\begin{proof}[Proof of Lemma~\ref{lem:SShapedPropertyUpgradeTime}]
This lemma only applies to cases where $x \neq H/(N+1)$, otherwise\\ $\left(\min\{x,H/(N^*+1)\},\max\{x,H/(N^*+1)\} \right) =\emptyset$. Therefore, we assume $x \neq H/(N+1)$ for the remainder of the proof. Suppose that there would be an optimal strategy $\Pi$ with an inter-upgrade time strictly between point of inflection $\min\{x,H/(N+1)\}$ and $\max\{x,H/(N+1)\}$ with $N\geq1$. Due to symmetry, we can assume without loss of generality that 
\begin{align*}
	T_{N+1} \in \left(\min\left\{x,\frac{H}{N+1}\right\},\max\left\{x,\frac{H}{N+1}\right\}\right).
\end{align*} 
Moreover, note that a necessary condition for an policy to be optimal is 
\begin{align}
	C'(T_i)= C'(T_{N+1}) \hspace{3cm} \forall i=1,\ldots,N.
	\label{eq:NecessaryConditionV}
\end{align}

Moreover, let 
\begin{align}
	&a = \inf\{t \in [0,x] : C'(t)=C'(T_{N+1})\},  \hspace{0.5cm} & c = \inf\{t \in [x,H] : C'(t)=C'(T_{N+1})\},\\
	&b = \sup\{t \in [0,x] : C'(t)=C'(T_{N+1})\}, \hspace{0.5cm} & d = \sup\{t \in [x,H] : C'(t)=C'(T_{N+1})\}.
\end{align}
We point out that at least $a$ and $b$ or $c$ and $d$ are well-defined (or all). Moreover, if it is well-defined, note that $a =b$ in case of strict convexity on $[0,x]$ and $c=d$ in case of strict concavity on $[x,H]$. Since $C(\cdot)$ has an S-shape with point of inflection $x$ that satisfies the technical requirement, we observe that $[a,b]$ and $[c,d]$ contain all solutions of~\eqref{eq:NecessaryConditionV}, and $C(\cdot)$ is linear on both intervals (if they exist). Therefore, without loss of generality, we can say that $T_i \in [a,b]$ for all $i=1,\ldots,N_1$ and $T_i \in [c,d]$ for all $i=N_1+1,\ldots,N_1+N_2$ where $N_1+N_2=N+1$. Note that $N_1=0$ if $a,b$ do not exist, and similarly, $N_2=0$ if $c,d$ do not exist.

\textbf{Case $x < H/(N+1)$:} or in other words, $x < T_{N+1} < H/(N+1)$. In that case $c,d$ are well-defined values and $T_{N+1} \in [c,d]$ and $N_2\geq 1$. If $N_2=1$, then $N=N_1 \geq 1$ and $a,b$ must also be well-defined. Therefore, $T_i \leq b \leq x < H/(N+1)$ for all $i=1,\ldots,N$. Recalling that $T_{N+1} < H/(N+1)$ yields
\begin{align*}
	\sum_{i=1}^{N+1} T_i < \sum_{i=1}^{N+1} H/(N+1)  = H,
\end{align*}
which is a contradiction. Therefore, we require that $N_2 \geq 2$. In that case, we can construct the policy $\tilde{\Pi}$ where
\begin{align*}
	\tilde{T}_i=T_i, \;\;\; i=1,\ldots,N-1, \hspace{1cm} \tilde{T}_{N} = T_N+T_{N+1}-x, \hspace{1cm} \tilde{T}_{N+1} = x.
\end{align*}
Note that $\tilde{T}_{N} \in (x,H)$. Since the cycle costs are concave and due to the technical requirement, $C(T_N) + C(T_{N+1}) > C(\tilde{T}_N) + C(\tilde{T}_{N+1})$, and hence
\begin{align}
	\mathcal{C}^{\tilde{\Pi}}(H) = \mathcal{C}^{\Pi}(H) + C(\tilde{T}_N) + C(\tilde{T}_{N+1}) - C(T_N) -C(T_{N+1}) < \mathcal{C}^{\Pi}(H).
\end{align}
In other words, our original policy $\Pi$ is not optimal, contradicting our hypothesis.

\textbf{Case $x > H/(N+1)$:} or in other words $H/(N+1) < T_{N+1} < x$. In that case, $a$ and $b$ are well-defined and $T_{N+1} \in [a,b]$ and $N_1 \geq 1$.  We will show that if this policy $\Pi$ exists, then there is another policy $\tilde{\Pi}$ that yields the same minimal total costs but has no inter-upgrade time in $(H/(N+1),x)$. First, if $T_i \in [a,b]$ for all $i=1,\ldots,N$, then let policy $\tilde{\Pi}$ be given by $\tilde{T}_i=H/(N+1)$ for all $i=1,\ldots,N+1$. By construction, it holds that $\mathcal{C}^{\tilde{\Pi}}(H) = \mathcal{C}^{\Pi}(H)$ since the cycle costs are linear on $[a,b]$, and hence $\tilde{\Pi}$ is also an optimal upgrade policy. 

Next, suppose that $T_i \not\in [a,b]$ for all $i=1,\ldots,N$, implying directly that $c$ and $d$ are well-defined. Without loss of generality, we can order the inter-upgrade times such that $T_i \in [c,d]$ for $i=1,\ldots,l$ for some $l \geq 1$ and $T_i \in [a,b]$ for $i=l+1,\ldots,N$ (if any). Consider the policy
\begin{align*}
	\tilde{T}_i=T_i, \;\;\; i=1,\ldots,l, \hspace{1cm} \tilde{T}_{N+1} = \tilde{T}_j = \frac{\sum_{i=l+1}^{N+1} T_i}{N-l+1}, \;\;\; j=l+1,\ldots,N.
\end{align*}
Note that $\tilde{T}_{N+1} \geq a$ and 
\begin{align*}
	\tilde{T}_{N+1} = \frac{\sum_{i=l+1}^{N+1} T_i}{N-l+1} = \frac{H-\sum_{i=1}^l T_i}{N-l+1} < \frac{H- l H/(N+1)}{N-l+1}  = \frac{H}{N+1} <x.
\end{align*}
That is, policy $\tilde{\Pi}$ has no inter-upgrade time in $(H/(N+1),x)$, and  $\mathcal{C}^{\tilde{\Pi}}(H) = \mathcal{C}^{\Pi}(H)$ since the cycle costs are linear on $[a,b]$.
\end{proof}

{
	\begin{lemma}
		Let the cycle costs have an S-shape with point of inflection $x \in (0,H)$, where $x$ satisfies the technical requirement. If $N^* \geq 1$, then there exists an optimal policy for which $T_i^* \leq x$ for some $i=1,\ldots,N^*+1$.
		\label{lem:NotOnlyUpgradeAfterTurningPoint}
	\end{lemma}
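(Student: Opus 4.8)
The plan is to start from an arbitrary optimal policy and, unless one of its cycles already has length at most $x$, to rebalance time between its shortest and its longest cycle so as to produce another optimal policy that does have such a cycle; the rebalancing is cost-free because the two cycles involved both lie in the concave part of $C$.

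Concretely, fix an optimal policy $\Pi^*$ with $N^*\ge 1$ upgrades and inter-upgrade times $T_1^*,\dots,T_{N^*+1}^*$ (with $T_{N^*+1}^*$ the remaining lifetime). If $T_i^*\le x$ for some $i$ we are done, so assume $T_i^*>x$ for all $i$; then in particular $a:=\min_i T_i^*>x$ and $b:=\max_i T_i^*\le H$, and since $N^*+1\ge 2$ these are two distinct cycles. Put $s:=a+b$ and let $\Pi'$ be the policy obtained from $\Pi^*$ by replacing the shortest cycle with one of length $x$ and the longest with one of length $s-x$, keeping all other cycles and the upgrade count unchanged; $\Pi'$ is feasible since the lengths still sum to $H$. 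Note $x,\,s-x\in[x,H]$: indeed $s-x=a+b-x>x$ since $a,b>x$, while $s-x\le H-x<H$ since $s=a+b\le H$. Hence $x\le a\le b\le s-x$ all lie in $[x,H]$, so we may write $a=\lambda x+(1-\lambda)(s-x)$ and $b=\mu x+(1-\mu)(s-x)$ with $\lambda,\mu\in[0,1]$, and $a+b=s=x+(s-x)$ forces $\lambda+\mu=1$. Concavity of $C$ on $[x,H]$ then gives
\begin{align*}
C(a)+C(b)\ \ge\ \bigl(\lambda C(x)+(1-\lambda)C(s-x)\bigr)+\bigl(\mu C(x)+(1-\mu)C(s-x)\bigr)\ =\ C(x)+C(s-x).
\end{align*}
Since $\Pi'$ differs from $\Pi^*$ only in these two cycle costs and not in the number of upgrades, $\mathcal{C}^{\Pi'}(H)\le\mathcal{C}^{\Pi^*}(H)=\mathcal{C}^*(H)$, so $\Pi'$ is also optimal; and $\Pi'$ has a cycle of length $x\le x$, which proves the claim.

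The argument is essentially one application of Jensen's inequality to a single pair of cycles, so there is no serious obstacle; the only thing requiring a moment's care is the feasibility/domain bookkeeping — checking that after the shift both modified cycle lengths still lie in $[x,H]$, where $C$ is concave (this is exactly where the standing assumption $a>x$, i.e.\ that every cycle exceeds $x$, is used). An alternative route would be to first apply Lemma~\ref{lem:SShapedPropertyUpgradeTime} to reduce to the case of cycles that are either $\le x$ or all equal to $H/(N^*+1)$ and then merge two of the latter, but the direct two-cycle rebalancing above handles unequal cycles in a single step and needs no auxiliary lemma.
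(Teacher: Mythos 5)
Your proof is correct, and it takes a genuinely different route from the paper's. The paper argues by contradiction: it first invokes Lemma~\ref{lem:SShapedPropertyUpgradeTime} to rule out inter-upgrade times in the gap between $x$ and $H/(N^*+1)$, which together with the constraint $\sum_i T_i = H$ forces every cycle to equal $H/(N^*+1)>x$; it then uses the technical requirement~\eqref{eq:TecnicalRequirement} to exhibit a strictly cheaper policy with $N^*$ cycles of length $x$, contradicting optimality. You instead give a direct construction: pick two cycles both exceeding $x$ and rebalance them to lengths $x$ and $a+b-x$, which is a majorization step within the concave region $[x,H]$, so concavity alone guarantees the cost does not increase and the resulting policy is still optimal while containing a cycle of length $x$. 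This avoids both the auxiliary lemma and the technical requirement entirely (so your argument in fact works for any choice of inflection point, not just the one normalized by~\eqref{eq:TecnicalRequirement}), and it needs no case split on whether $x$ is smaller or larger than $H/(N^*+1)$. The only cosmetic wrinkle is your phrase that the min and max are ``two distinct cycles'': if all cycles are equal the minimizing and maximizing \emph{values} coincide, but since $N^*+1\geq 2$ you can always select two distinct indices, and the rebalancing argument goes through verbatim with $a=b$. What the paper's heavier machinery buys is a strict inequality (showing the all-greater-than-$x$ configuration is strictly suboptimal), but the lemma only claims existence of an optimal policy with a short cycle, so your weak inequality suffices.
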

}

\begin{proof}[Proof of Lemma~\ref{lem:NotOnlyUpgradeAfterTurningPoint}]
Suppose that the statement is not true, and $T_1,\ldots,T_{N+1} > x$. In view of Lemma~\ref{lem:SShapedPropertyUpgradeTime}, we observe that 
\begin{align*}
	\sum_{i=1}^{N+1} T_i \geq \sum_{i=1}^{N+1} \frac{H}{N+1} = H,
\end{align*}
where $\sum_{i=1}^{N+1} T_i=H$ holds if and only if $T_1=\ldots=T_{N+1}=H/(N+1) >x$. In other words, if $T_1,\ldots,T_{N+1} > x$, then $T_1=\ldots=T_{N+1}=H/(N+1) >x$, and the total costs under this policy are given by $N c_0 + (N+1) C(H/(N+1))$.

Consider the policy $\tilde{\Pi}$ where $\tilde{T}_1=\ldots=\tilde{T}_N=x$ and $\tilde{T}_{N+1}=H-N x$. Then, due to the technical requirement and the notion that we have a setting where~$x<H/(N+1)$,
\begin{align*}
	\mathcal{C}^\Pi (H) = N c_0 + N C(x) + C(H-n x) <  N c_0 + (N+1) C(H/(N+1)),
\end{align*}
contradicting the hypothesis.
\end{proof}

{
	\begin{proof}[Proof of Proposition~\ref{prop:OptimalUpgradePolicyBase}]
	If $N^*=0$, then the proposition holds and there is nothing to prove. Therefore, let $N^*\geq1$ for the remainder of the proof. That is, suppose there is an optimal policy where we upgrade $N^*=N_1+N_2-1 \geq 1$ times, where $N_1$ and $N_2$ are the number of times we have inter-upgrade times in the interval $[0,x]$ and $(x,H]$, respectively. In view of~\eqref{eq:NecessaryConditionC}, we observe that there exists such an optimal policy that satisfies $T_1=\ldots=T_{N_1} \leq x$ and $T_{N_1+1}=\ldots=T_{N_1+N_2} >x$ (if $N_2 >0$). We point out that $N_1 \geq 1$ due to Lemma~\ref{lem:NotOnlyUpgradeAfterTurningPoint}. To conclude the result, what remains to show is that $N_2 \in \{0,1\}$. To that purpose, suppose that $N_2\geq2$. Consider the policy $\tilde{\Pi}$ with 
	\begin{align*}
		\tilde{T}_i=T_1, \;\;\; i=1,\ldots,N_1,& \hspace{1cm} \tilde{T}_j=T_{N^*+1}, \;\;\; j=N_1+1,\ldots,N^*-1, \\
		\tilde{T}_{N^*} = x,& \hspace{1cm} \tilde{T}_{N^*+1} = 2 T_{N^*+1}-x.
	\end{align*}
	Note that this solution is feasible, since
	\begin{align*}
		\sum_{i=1}^{N_1+N_2} \tilde{T}_i = N_1 T_1 + (N_2-2) T_{N^*+1} + x + (2 T_{N^*+1}-x) =  N_1 T_1 + N_2 T_{N^*+1} = H.
	\end{align*}
	Due to the technical requirement, we observe that
	\begin{align*}
		\mathcal{C}^{\tilde{\Pi}} (H) &= N^* c_0 +N_1 C(T_1) + (N_2-2) C(T_{N^*+1}) + C(x) + C(2 T_{N^*+1}-x) \\
		&< N^* c_0 +N_1 C(T_1) + N_2 C(T_{N^*+1}) =  \mathcal{C}^{\Pi} (H).
	\end{align*}
	This contradicts the hypothesis, proving our proposition.
	\end{proof}
}

{
	\begin{proof}[Proof of Proposition~\ref{prop: general_cycle_cost}]
	The result directly follows from the same arguments used in the proof of Proposition~\ref{prop:OptimalUpgradePolicyBase}, so a detailed proof is omitted.
	\end{proof}
}

\subsection{Proofs for the sensitivity analysis}\label{app:sensitivityProofs}

\begin{proof}[Proof of Proposition~\ref{prop:MainCdResult}]
This is a direct consequence of the total cost function~\eqref{eq:TotalCostsGeneral}. Note that for any fixed policy~$\Pi$, the total cost function is linear in $c_d$ with slope $S^{\Pi}$. Therefore, as penalty $c_d$ increases, the optimal policy can only change to one with a smaller value of~$S^\Pi$.
\end{proof}

\begin{proof}[Proof of Lemma~\ref{lem:NDecreasingInc0}]
For every policy~$\Pi$, the total costs are given by~\eqref{eq:TotalCostsGeneral}. Write $\mathcal{P}_N$ as the set of all possible upgrade policies with $N$ upgrades. We observe that for every $N \in \mathbb{N}$, we can determine the upgrade times that minimize the total costs by solving
\begin{align}
	K^N(H) := \min_{\Pi \in \mathcal{P}_N} S^\Pi c_d + \sum_{i=1}^{N+1} C(T_i),
\end{align}
regardless of the value of~$c_0$. In particular,
\begin{align*}
	\mathcal{K}^*(H) = \min_{N \in \mathbb{N}} \left\{ N c_0 + K^N(H)\right\}.
\end{align*}
We point out that
\begin{align*}
	\frac{\partial}{\partial c_0} \left(N c_0 + K^N(H) \right) = N.
\end{align*} 
That is, it is linear in $c_0$ with slope $N$, and $N^*$ is the argument that minimizes the total costs. In conclusion, the optimal number of upgrades will never decrease.
\end{proof}

\begin{proof}[Proof of Lemma~\ref{lem:SteepIncreaseCycleCostsAfterZ}]
For any general variable $X$, let $\tilde{X}$ denote the corresponding one in the alternate setting. For any policy~$\Pi$, it holds that $\mathcal{K}^\Pi (H) \geq \mathcal{\tilde{K}}^\Pi(H)$ since $\tilde{C}(t) \leq C(t)$ for all $t \in [0,H]$. Moreover, since $\tilde{T}_i^* \leq z$ for all $i=1,\ldots,\tilde{N}^*+1$, it holds that $\mathcal{K}^{\tilde{\Pi}^*} (H) =  \mathcal{\tilde{K}}^*(H)$. In conclusion, we obtain
\begin{align*}
	\mathcal{K}^{\tilde{\Pi}^*} (H) =  \mathcal{\tilde{K}}^*(H) \leq \mathcal{\tilde{K}}^\Pi(H) \leq \mathcal{K}^\Pi (H),
\end{align*} 
for any policy~$\Pi$. We can conclude that $\tilde{\Pi}^*$ is also optimal for the original setting.
\end{proof}

\end{document}